\newcommand{\ie}{{i.e.}\ }
\newcommand{\cf}{{cf.}\ }
\newcommand{\ko}{\: , \;}
\newcommand{\ul}[1]{\underline{#1}}
\newcommand{\ol}[1]{\overline{#1}}
\numberwithin{equation}{section}
\newtheorem{classification-theorem}[subsection]{Classification Theorem}
\newtheorem{decomposition-theorem}[subsection]{Decomposition Theorem}
\newtheorem{proposition-definition}[subsection]{Proposition-Definition}
\newtheorem{periodicity-conjecture}[subsection]{Periodicity Conjecture}
\newtheorem{theorem}{Theorem}
\numberwithin{theorem}{section}
\newtheorem{thmx}{Theorem}
\newtheorem{lemma}[theorem]{Lemma}
\newtheorem{proposition}[theorem]{Proposition}
\theoremstyle{definition}
\newtheorem{definition}[theorem]{Definition}
\theoremstyle{plain}
\newtheorem{example}[theorem]{Example}
\newtheorem{remarks}[theorem]{Remarks}
\newcommand{\reminder}[1]{}
\renewcommand{\mod}{\mathrm{mod}\,}
\newcommand{\bimod}{\mathrm{bimod}\,}
\newcommand{\sg}{\mathrm{sg}\,}
\newcommand{\CM}{\mathrm{CM}\,}
\newcommand{\cosg}{\mathrm{cosg}\,}
\newcommand{\Mod}{\mathrm{Mod}\,}
\newcommand{\Bimod}{\mathrm{Bimod}\,}
\newcommand{\proj}{\mathrm{proj}\,}
\newcommand{\per}{\mathrm{per}\,}
\newcommand{\add}{\mathrm{add}\,}
\newcommand{\op}{^{op}}
\newcommand{\Tr}{\mathrm{Tr}}
\newcommand{\sym}{\mathrm{sym}}
\newcommand{\Sym}{\mathrm{Sym}\,}
\newcommand{\Z}{\mathbb{Z}}
\newcommand{\C}{\mathbb{C}}
\newcommand{\iso}{\xrightarrow{_\sim}}
\newcommand{\liso}{\xleftarrow{_\sim}}
\newcommand{\id}{\mathbf{1}}
\newcommand{\F}{\mathbb{F}}
\newcommand{\Hom}{\mathrm{Hom}}
\newcommand{\End}{\mathrm{End}}
\newcommand{\GL}{\mathrm{GL}}
\newcommand{\SL}{\mathrm{SL}}
\newcommand{\ten}{\otimes}
\newcommand{\ca}{{\mathcal A}}
\newcommand{\cc}{{\mathcal C}}
\newcommand{\cd}{{\mathcal D}}
\newcommand{\ch}{{\mathcal H}}
\newcommand{\Si}{\Sigma}
\newcommand{\si}{\sigma}
\newcommand{\eps}{\varepsilon}
\newcommand{\del}{\partial}
\renewcommand{\hat}[1]{\widehat{#1}}
\renewcommand{\tilde}[1]{\widetilde{#1}}
\begin{document}

\date{\today}

\title[Singularity categories via higher McKay quivers with potential]{Singularity categories via\\[0.15cm] higher McKay quivers with potential}

\author{Junyang Liu}
\address{School of Mathematical Sciences, University of Science and Technology of China, Hefei 230026, China}
\address{Graduate School of Mathematical Sciences, The University of Tokyo, 3-8-1 Komaba \linebreak Meguro-ku Tokyo 153-8914, Japan}
\email{liuj@imj-prg.fr}
\urladdr{https://webusers.imj-prg.fr/~junyang.liu}

\begin{abstract}
In 2018, Kalck and Yang showed that the singularity categories associated with $3$-dimensional Gorenstein quotient singularities are triangle equivalent (up to direct summands) to small cluster categories associated with McKay quivers with potential. We introduce higher McKay quivers with potential and generalize Kalck and Yang's theorem to arbitrary dimensions. The singularity categories we consider occur as the stable categories of categories of Cohen--Macaulay modules. We refine our description of the singularity categories by showing that these categories of Cohen--Macaulay modules are equivalent to Higgs categories in the sense of Wu. Moreover, we describe the singularity categories in the non-Gorenstein case.
\end{abstract}

\keywords{singularity category, cluster category, McKay quiver with potential}

\subjclass[2020]{14B05, 16E45, 18G80}


\maketitle

\vspace*{-1cm}
\tableofcontents

\section{Introduction}

Singularity categories were introduced and studied by Buchweitz \cite{Buchweitz21} and later also by Orlov \cite{Orlov04, Orlov06, Orlov11}, who related them to Kontsevich's homological
mirror symmetry conjecture. They may be seen as a categorical measure for the complexity of the singularities of a noetherian scheme. The explicit description of the singularity categories associated with commutative Gorenstein rings is a problem that has received much attention over the years. Ginzburg \cite{Ginzburg06} described $3$-dimensional Gorenstein quotient singularities via McKay quivers with potential. By Auslander's algebraic McKay correspondence, the singularity categories associated with Kleinian singularities are triangle equivalent to the $1$-cluster categories associated with Dynkin quivers. In 2015, Amiot--Iyama--Reiten \cite{AmiotIyamaReiten15} constructed triangle equivalences between the singularity categories associated with cyclic quotient singularities and the (higher) cluster categories associated with certain finite-dimensional algebras. In 2016, Thanhoffer de V\"olcsey--Van~den Bergh \cite{ThanhofferVandenBergh16} used McKay quivers with potential to obtain an explicit description of such finite-dimensional algebras for singularities of dimension $3$. Later, Kalck--Yang \cite{KalckYang18} extended this result to non-isolated singularities via small cluster categories.

The dimension `$3$' appears in the quotient singularities because the classical McKay quivers with potential and Ginzburg dg(=differential graded) algebras are of Calabi--Yau dimension $3$. Ginzburg \cite{Ginzburg06} generalized the definition of quivers with potential to arbitrary dimensions. He introduced an important class of dg algebras, which are called (higher-dimensional) Ginzburg dg algebras. Since it is natural to consider higher-dimensional quotient singularities and Ginzburg dg algebras have higher-dimensional analogs, it seems natural to generalize McKay quivers with potential to the higher-dimensional case to describe the singularity categories associated with higher-dimensional quotient singularities.

We introduce higher McKay quivers with potential associated with finite subgroups of $\SL_n$ for arbitrary positive integer $n$. Without assuming the singularities to be isolated, we generalize Kalck--Yang's theorem to arbitrary dimensions. In the case of isolated singularities, we refine their theorem by showing that the corresponding category of (maximal) Cohen--Macaulay modules is equivalent to a Higgs category in the sense of Wu \cite{Wu23a}. Consequently, the singularity category is $(n-1)$-Calabi--Yau and contains a canonical \linebreak $(n-1)$-cluster-tilting object. We also prove the structure theorem in the non-Gorenstein case, \ie $\GL_n$ case. Our proof is inspired by \cite{Ginzburg06}.

The article is organized as follows: In Section~\ref{ss:quivers with potential}, we discuss the Ginzburg dg algebra associated with a quiver with potential. In Section~\ref{ss:singularity categories}, we recall singularity categories and (small) cluster categories.

In Proposition~\ref{prop:minimal model}, we prove that polynomial algebras are quasi-isomorphic to the Ginzburg dg algebras associated with explicit quivers with potential. In Lemma~\ref{lemma:functor property}, we construct a monoidal functor which plays a crucial role in the proof of our main results. In Lemma~\ref{lemma:natural isomorphism}, we give another description of this functor via skew group algebras. Then we state the structure theorem in the $\GL_n$ case as follows.

\begin{thmx}[see Theorem~\ref{thm:GLn} for details] \label{thm:A}
Let $G\subset \GL_n$ be a finite subgroup. Then the singularity category associated with $k[x_1, \ldots, x_n]^G$ is triangle equivalent up to direct summands to the small cosingularity category (\cf Section~\ref{ss:singularity categories}) associated with an explicit dg tensor algebra $A'$. If $k[x_1, \ldots, x_n]^G$ has isolated singularities, then this is a triangle equivalence.
\end{thmx}

In Lemma~\ref{lemma:invariant subalgebra}, we describe the invariant subalgebra and some modules over it via the skew group algebra. In Proposition~\ref{prop:GLn minimal model}, we give the minimal model (in the sense of \cite{ThanhofferVandenBergh16}) of the skew group algebra up to a Morita equivalence. This is the key to proving Theorem~\ref{thm:A}. In Definition~\ref{def:McKay quiver}, we introduce higher McKay quivers with potential for arbitrary finite subgroups of $\SL_n$. The associated $n$-dimensional Ginzburg dg algebra does not depend on various choices of higher McKay quivers with potential, \cf Remarks~\ref{rk:well-defined}. In the $\SL_n$ case, we find a suitable homogeneous $k$-basis of the tensor factor of the dg tensor algebra $A'$ in Theorem~\ref{thm:A} to make $A'$ into an $n$-dimensional Ginzburg dg algebra associated with a higher McKay quiver with potential. Moreover, in the case of isolated singularities, we establish a connection between the category of Cohen--Macaulay modules and a Higgs category in the sense of \cite{Wu23a, KellerWu23}.

\begin{thmx}[see Theorem~\ref{thm:SLn} for details] \label{thm:B}
Let $G\subseteq \SL_n$ be a finite subgroup. Then the singularity category associated with $k[x_1, \ldots, x_n]^G$ is triangle equivalent up to direct summands to the small cluster category (\cf Section~\ref{ss:singularity categories}) associated with an $n$-dimensional Ginzburg dg algebra $\Pi_n(Q', W')$. If $k[x_1, \ldots, x_n]^G$ has isolated singularities, then the category of Cohen--Macaulay $k[x_1, \ldots, x_n]^G$-modules is equivalent to the Higgs category associated with an $n$-dimensional Ginzburg dg algebra $\Pi_n(Q, W)$ and an idempotent $e_0$.
\end{thmx}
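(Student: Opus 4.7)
The plan is to bootstrap from Theorem~\ref{thm:GLn} ($=$~Theorem~A), which furnishes a triangle equivalence up to direct summands between the singularity category and the small cluster category of a dg tensor algebra $A'$. The first assertion of Theorem~B amounts to upgrading $A'$ to an $n$-dimensional Ginzburg dg algebra $\Pi_n(Q', W')$ under the $SL_n$ hypothesis. The second, sharper assertion requires identifying the Frobenius category of maximal Cohen--Macaulay modules with Wu's Higgs category, which sits above the stable-category statement of the first assertion.

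For the first part, I would exploit the hypothesis $G \subseteq SL_n$ to pin down a homogeneous $k$-basis of $A'$ that manifests it as a Ginzburg dg algebra. Proposition~\ref{prop:GLn minimal model} produces, up to Morita equivalence, the minimal model of the skew group algebra; the key observation in the $SL_n$ case is that the top component of the relevant Koszul-type resolution carries an invariant volume form $dx_1 \wedge \cdots \wedge dx_n$, which is absent for general $G \subseteq GL_n$. This distinguished one-dimensional piece allows me to choose basis vectors in the correct degrees so as to identify the vertices, arrows, and ``opposite'' arrows of a McKay quiver $Q'$ and to single out a generator dual to a potential $W'$, as specified by Definition~\ref{def:McKay quiver}; by Remark~\ref{rem:well-defined} the resulting Ginzburg dg algebra is independent of the choice of potential. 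Matching the differentials then reduces to checking that the cyclic derivatives $\partial W'$ recover the defining relations, which I would verify using the monoidal functor of Lemma~\ref{lemma:functor property} together with the skew-group-algebra description in Lemma~\ref{lemma:natural isomorphism}.

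For the second, isolated-singularities part, the strategy is to apply Wu's criterion characterizing Higgs categories as Frobenius extriangulated categories whose stable category is the associated cluster category and whose projective-injective summand corresponds to the chosen idempotent $e_0$. Under the isolated singularities hypothesis, the category of maximal Cohen--Macaulay modules over $k[x_1,\ldots,x_n]^G$ is Frobenius and its stable category is the singularity category; by the first assertion this stable category is triangle equivalent to the small cluster category of $\Pi_n(Q, W)$. One then checks that the equivalence lifts to the Frobenius level by matching projective-injective summands: on the MCM side the projective-injective summand is $k[x_1,\ldots,x_n]^G$ itself, which on the Higgs side corresponds to the summand cut out by $e_0$.

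The main obstacle is the first step: in order to exhibit the dg tensor algebra $A'$ from Theorem~A as a Ginzburg dg algebra $\Pi_n(Q', W')$, one must choose a homogeneous $k$-basis compatible with the Calabi--Yau pairing implicit in the $SL_n$ condition, and then verify that the induced differential coincides with the cyclic-derivative differential of a potential. The bookkeeping between the Koszul resolution from Proposition~\ref{prop:minimal model}, the skew-group action, and the Ginzburg differential is delicate, and this is precisely the step flagged in the introduction as ``we find a suitable homogeneous $k$-basis of the dg tensor algebra $A'$.''
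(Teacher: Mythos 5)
Your approach to the first assertion (upgrading the dg tensor algebra $A'$ to a Ginzburg dg algebra $\Pi_n(Q', W')$) is essentially the paper's: Proposition~\ref{prop:SLn minimal model} does exactly what you describe — exploits the isomorphism $\pi\colon V^{\wedge n}\iso k$ to extract a non-degenerate graded anti-symmetric pairing on $F\Si^{-1}U_c^*$, chooses a Lagrangian basis $Q_1$, defines the potential via traces, and verifies that the differential of $A$ matches the Ginzburg differential. So for part~a) you are on the same track as the paper, and you correctly identified the delicate step.

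For the second assertion, however, there is a genuine gap. You appeal to ``Wu's criterion characterizing Higgs categories as Frobenius extriangulated categories whose stable category is the associated cluster category and whose projective-injective summand corresponds to the chosen idempotent $e_0$.'' No such recognition criterion exists as stated, and it is not how the paper proceeds. The Higgs category $\ch_{\Pi,e_0}$ is \emph{defined} a priori via towers of triangles in the relative cluster category; what Keller--Wu (Theorem~4.18 of~\cite{KellerWu23}, together with Lemma~3.23) prove is that, \emph{under hypotheses which must be checked}, $\ch_{\Pi,e_0}$ is a Karoubian Frobenius exact category with projective-injectives $\add e_0\Pi$ and with $\End_{\ch_{\Pi,e_0}}(\Pi)\simeq H^0(\Pi)$. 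The substantive work of part~b) in the paper is precisely verifying those hypotheses, and your proposal skips them: (i) that $H^0(\Pi_n(Q',W'))$ is finite-dimensional — obtained from Auslander's $\Hom$-finiteness of $\ul{\mathrm{cm}}\,\hat{R}^G$ under isolated singularities, Iyama--Wemyss' result that $\hat{R}$ is maximal Cohen--Macaulay over $\hat{R}^G$, and Leuschke--Wiegand's identification $G\#\hat{R}\iso\End_{\hat{R}^G}(\hat{R})$; (ii) that $\add e_0\Pi\subseteq\add\Pi$ is functorially finite; (iii) that the homology of $\Pi$ is concentrated in degree $0$ (via Proposition~\ref{prop:SLn minimal model}). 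Moreover, the final identification $\CM R^G\to\ch_{\Pi,e_0}$ is not obtained by ``lifting the stable equivalence by matching projective-injective summands''; it comes from Theorem~2.7 of Kalck--Iyama--Wemyss--Yang~\cite{KalckIyamaWemyssYang15}, an abstract recognition theorem for Frobenius categories, which your proposal does not invoke. A minor but related slip: you write that the stable category from part~a) is ``triangle equivalent to the small cluster category of $\Pi_n(Q,W)$,'' whereas part~a) involves $\Pi_n(Q',W')$; the passage from $\cc^s_{\Pi_n(Q',W')}$ to the full cluster category $\cc_{\Pi_n(Q',W')}$ uses the finite-dimensionality of $H^0(\Pi_n(Q',W'))$ established in (i), and $\Pi_n(Q,W)$ with its idempotent $e_0$ appears only on the Higgs-category side.
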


Then we give examples to illustrate the construction of higher McKay quivers with potential in various dimensions, \cf Examples~\ref{ex:dimension 2}, \ref{ex:dimension 3}, and \ref{ex:dimension 4}. In Proposition~\ref{prop:SLn minimal model}, we prove that up to a Morita equivalence, the skew group algebra is quasi-isomorphic to the $n$-dimensional Ginzburg dg algebra associated with a higher McKay quiver with potential. This is the key to proving Theorem~\ref{thm:B}.

\subsection*{Acknowledgments}

The author is very grateful to Dong Yang for stimulating discussions and help with the references. The author thanks Bernhard Keller for helpful comments and suggestions, and Martin Kalck for help with the references. The author is indebted to an anonymous referee for a very careful reading of the manuscript and many helpful comments and suggestions.

\section{Notation}

The following notation is used throughout the article: We let $k$ be a field. Algebras have units and morphisms of algebras preserve the units. Modules are unital right modules. We assume that $k$ acts centrally on all bimodules we consider. For a $k$-algebra $A$, we denote the category of $A$-modules, finitely generated $A$-modules, $A$-bimodules, finitely generated $A$-bimodules by $\Mod A$, $\mod A$, $\Bimod A$, $\bimod A$, respectively. The degree of a homogeneous element $a$ in a graded vector space is denoted by $|a|$. We denote the shift functor of graded vector spaces by $\Si$ and write $\si \colon A\to \Si A$ for the canonical map of degree $-1$. We use cohomological grading so that differentials are of degree $1$. For a graded bimodule $V$, we write $TV$ for the graded tensor algebra $\bigoplus_{p\geq 0}V^{\ten p}$.

\section{Preliminaries} \label{section:preliminaries}

\subsection{Quivers with potential} \label{ss:quivers with potential}

Ginzburg \cite{Ginzburg06} introduced a particular class of Calabi--Yau dg algebras as follows, \cf also \cite{VandenBergh15}. Let $n$ be a positive integer and $Q=(Q_0,Q_1,s,t)$ a finite graded quiver whose arrows are concentrated in degrees $[\frac{2-n}{2},0]$. We write $e_i$ for the lazy
path at a vertex $i$ of $Q$.
Let $\eps \colon Q_1 \to \{ \pm 1\}$ be a map.
Let $\tilde{Q}$ be the graded quiver obtained from $Q$ by adding an arrow $\alpha^*\colon j \to i$ of degree $2-n-|\alpha|$ for each arrow $\alpha\colon i \to j$. The graded vector space $\Tr(k\tilde{Q})=k\tilde{Q}/[k\tilde{Q},k\tilde{Q}]$ is spaned by the cyclic equivalence classes of cycles in $\tilde{Q}$. Here the cyclic equivalence relation is generated by the $pp'=(-1)^{|p||p'|}p'p$, $p$, $p' \in k\tilde{Q}$. The necklace bracket $\{?, -\}_\eps$ on $\Tr(k\tilde{Q})$ is determined by
\begin{align*}
\{p, p'\}_\eps= & \sum_{\alpha \in Q_1}(\sum_{\{(u, v, u', v')\mid p=u\alpha v, p'=u'\alpha^* v'\}}(-1)^{|u'|+|\alpha|+|u|(|\alpha|+|v|)}\eps(\alpha)u'vuv' \\
 & -\sum_{\{(u, v, u', v')\mid p=u\alpha^* v, p'=u'\alpha v'\}}(-1)^{|u'|+(|\alpha|+1)|\alpha^*|+|u|(|\alpha|+|v|)}\eps(\alpha)u'vuv')\: ,
\end{align*}
where $u$, $v$, $u'$, $v'$ run through the paths in $\tilde{Q}$.
A {\em potential} $W$ on $\tilde{Q}$ is an element of $\Tr(k\tilde{Q})$ which is of degree $3-n$ and only contains paths of length at least $3$ such that we have $\{W, W\}_\eps=0$. For each arrow $\alpha$ of $\tilde{Q}$, we have the {\em cyclic derivative} $\del_{\alpha}\colon \Tr(k\tilde{Q})\to k\tilde{Q}$ which maps the class of a cycle $p$ to
\[
\sum_{\{(u,v)\mid p=u\alpha v\}}(-1)^{|u|(|\alpha|+|v|)}vu\: ,
\]
where $u$ and $v$ run through the paths in $\tilde{Q}$. Let $\bar{Q}$ be the graded quiver obtained from $\tilde{Q}$ by adding a loop $t_i$ of degree $1-n$ at each vertex $i$. The {\em $n$-dimensional (non-completed) Ginzburg dg algebra} $\Pi_n(Q, W, \eps)$ associated with the triple $(Q,W,\eps)$ is the dg path algebra $(k\bar{Q}, d)$ with the differential determined by
\begin{align*}
d(\alpha) & =-(-1)^{(|\alpha|+1)|\alpha^*|}\eps(\alpha)\del_{\alpha^*}W\: , \\
d(\alpha^*) & =(-1)^{|\alpha|}\eps(\alpha)\del_{\alpha}W \: , \\
\mbox{and }d(t_i) & =\sum_{\alpha \in Q_1} \eps(\alpha)e_i(\alpha \alpha^*-(-1)^{|\alpha||\alpha^*|}\alpha^* \alpha)e_i \: .
\end{align*}
We have $d^2(\alpha)=0$ for all $\alpha \in \tilde{Q}_1$ if and only if we have $\{W, W\}_\eps=0$. If the potential $W$ vanishes or the map $\eps$ is the constant map of value $1$, then we omit them respectively for simplicity.

Ginzburg dg algebras arise as examples of deformed Calabi--Yau completions in the sense of Keller \cite{Keller11b}. Notice that our definition is a special case of Van den Bergh's deformed dg preprojective algebras. If we let $Q^\dagger$ be the graded quiver obtained from $Q$ by adding, at each vertex, a loop of degree $\frac{2-n}{2}$ for each loop of odd degree $\frac{2-n}{2}$ at that vertex, then our $\Pi_n(Q, W)$ coincides with Van den Bergh's $\Pi(Q^\dagger, n, W)$.

\subsection{Singularity categories and cluster categories} \label{ss:singularity categories}

Let $R$ be a commutative noetherian ring. It is {\em regular} if for each prime ideal $\mathfrak{p}$ of $R$, the dimension of $\mathfrak{p}R_{\mathfrak{p}}/\mathfrak{p}^2 R_{\mathfrak{p}}$ over $R_{\mathfrak{p}}/\mathfrak{p}R_{\mathfrak{p}}$ equals the Krull dimension of $R_{\mathfrak{p}}$. Equivalently, the localization of $R$ at each prime ideal has finite global dimension. It is {\em Gorenstein} if its localization at each prime ideal has finite injective dimension as a module over itself. It has {\em isolated singularities} if its localization at each non-maximal prime ideal is regular. A finitely generated $R$-module $M$ is {\em (maximal) Cohen--Macaulay} if either $M$ vanishes, or for each prime ideal $\mathfrak{p}$ of $R$, the depth of the $R_{\mathfrak{p}}$-module $M_{\mathfrak{p}}$ equals the Krull dimension of $R_{\mathfrak{p}}$. We write $\CM R$ for the category of Cohen--Macaulay $R$-modules. Let $\cd^b(\mod R)$ be the bounded derived category of finitely generated $R$-modules. Its thick subcategory generated by the free $R$-module of rank one is the {\em perfect derived category} $\per R$. The {\em singularity category} $\sg R$ associated with $R$ is defined \cite{Buchweitz21, Orlov04} as the Verdier quotient $\cd^b(\mod R)/\per R$. A commutative noetherian ring $R$ is regular if and only if the singularity category $\sg R$ vanishes.

Let $A$ be a dg $k$-algebra. The thick subcategory of $\cd(A)$ generated by the free dg \linebreak $A$-module of rank one is the {\em perfect derived category} $\per A$. It consists of compact objects in $\cd(A)$. If the dg algebra $A$ is concentrated in degree $0$, then $\per A$ is triangle equivalent to the bounded homotopy category $\ch^b(\proj A)$ of finitely generated projective $A$-modules.
The {\em finite-dimensional derived category} $\cd_{fd}(A)$ is the full subcategory of the dg $A$-modules in $\cd(A)$ whose homology is of finite total dimension.
The {\em finitely generated derived category} $\cd_{fg}(A)$ is defined \cite{KalckYang18} as the full subcategory of the dg $A$-modules in $\cd(A)$ whose homology is finitely generated as a graded $H^0(A)$-module. It contains the subcategory $\cd_{fd}(A)$. If the ring $H^0(A)$ is right noetherian, then $\cd_{fg}(A)$ is a triangulated subcategory of $\cd(A)$. If the algebra $H^0(A)$ is finite-dimensional, then we have $\cd_{fg}(A)=\cd_{fd}(A)$. If the dg algebra $A$ is concentrated in degree $0$ and right noetherian as a ring, then $\cd_{fg}(A)$ is triangle equivalent to $\cd^b(\mod A)$. Recall that $A$ is {\em smooth} if $A$ is perfect as a dg $A$-bimodule. In this case, the subcategory $\cd_{fd}(A)$ is contained in $\per A$.

For an $n$-dimensional Ginzburg dg algebra $\Pi=\Pi_n(Q, W, \eps)$, the associated {\em cluster category} $\cc_{\Pi}$ is defined \cite{Amiot09, Plamondon11, Guo11} as the idempotent completion of the Verdier quotient $\per \Pi/\cd_{fd}(\Pi)$. For an $n$-dimensional Ginzburg dg algebra $\Pi$ such that the ring $H^0(\Pi)$ is right noetherian and the subcategory $\cd_{fg}(\Pi)$ is contained in $\per \Pi$, the associated {\em small cluster category} $\cc^s_{\Pi}$ is defined \cite{KalckYang18} as the idempotent completion of the Verdier quotient $\per \Pi/\cd_{fg}(\Pi)$. For a smooth dg algebra $A$, the associated {\em cosingularity category} $\cosg A$ is defined \cite{Keller24} as the Verdier quotient $\per A/\cd_{fd}(A)$. For a dg algebra $A$ such that the ring $H^0(A)$ is right noetherian and the subcategory $\cd_{fg}(A)$ is contained in $\per A$, the associated {\em small cosingularity category} $\cosg\!^s A$ is defined as the Verdier quotient $\per A/\cd_{fg}(A)$.

For an $n$-dimensional Ginzburg dg algebra $\Pi=\Pi_n(Q, W, \eps)$ and a (strict) idempotent $e$ (\ie $e^2=e$) of it, the associated {\em relative cluster category} $\cc_{\Pi, e}$ is defined \cite{Wu23a, KellerWu23} as the idempotent completion of the Verdier quotient $\per \Pi/\cd_{fd, e}(\Pi)$, where $\cd_{fd, e}(\Pi)$ denotes the triangulated subcategory of $\cd_{fd}(\Pi)$ generated by the dg $A$-modules annihilated by $e$. If the idempotent $e$ vanishes, then we recover the absolute notion. The associated {\em Higgs category} $\ch_{\Pi, e}$ is defined \cite{Wu23a, KellerWu23} as the full subcategory of $\cc_{\Pi, e}$ whose objects are the $X$ such that the morphism spaces $\cc_{\Pi, e}(\Pi, \Si^i X)$ are finite-dimensional for all $i=1$, \ldots, $n-2$, and we have $\cc_{\Pi, e}(e\Pi, \Si^i X)=0=\cc_{\Pi, e}(X, \Si^i e\Pi)$ for all positive integers $i$. If the subcategory $\add e\Pi \subseteq \add \Pi$ is functorially finite, the algebra $H^0(\Pi/e)$ is finite-dimensional, and the homology of $\Pi$ is concentrated in degree $0$, then $\ch_{\Pi, e}$ is a Frobenius exact category and its stable category is triangle equivalent to the cluster category associated with the dg quotient $\Pi/e$, \cf parts~(2) and (3) of Corollary~3.26 of \cite{Wu25}.

\section{The structure theorems}

\subsection{Quivers with potential for polynomial algebras}

Let $n$ be a positive integer. Denote the polynomial algebra $k[x_1, \ldots, x_n]$ by $R$. Let $Q$ be the graded quiver with a single vertex and arrows $\{x_S\}$, where $S$ runs through the nonempty subsets of $[n]=\{1, \ldots, n\}$ satisfying $|S|<\frac{n}{2}$, or $|S|=\frac{n}{2}$ and $1\in S$, and the arrow $x_S$ is of degrees $1-|S|$. Let $\tilde{Q}$ be the graded quiver obtained from $Q$ by adding an arrow $x_{S^c}$ of degree $1-|S^c|$ for each arrow $x_S$, where $S^c$ denotes the complement of $S$ in $[n]$. Let $\bar{Q}$ be the graded quiver obtained from $\tilde{Q}$ by adding a loop $x_{[n]}$ of degree $1-n$. We write $\wedge_{i\in S}x_i$ for the wedge product whose indices of the variables are in ascending order. For a disjoint union $A_1\sqcup \cdots \sqcup A_m=[n]$, we define $\eps_{A_1, \ldots, A_m}$ by
\[
(\wedge_{i\in A_1}x_i)\wedge \cdots \wedge(\wedge_{i\in A_m}x_i)=\eps_{A_1, \ldots , A_m}\wedge_{i\in [n]}x_i \: .
\]
Let $W$ be the potential
\[
\sum_{A\sqcup B\sqcup C=[n]}(-1)^{|B|-1}\eps_{A, B, C}x_A x_B x_C
\]
on $\tilde{Q}$, where $(A, B, C)$ runs through the triples of nonempty sets modulo the cyclic permutations. Let $\eps$ be the map $Q_1 \to \{ \pm 1\}$ which maps $x_S$ to $(-1)^{|S|-1}\eps_{S,S^c}$. In the spirit of Section~6.2 of \cite{ThanhofferVandenBergh16}, we construct quivers with potential for polynomial algebras as follows.

\begin{proposition} \label{prop:minimal model}
The morphism
\[
\Pi_n(Q, W, \eps)\longrightarrow R
\]
of dg algebras, which maps $x_{\{i\}}$ to $x_i$ for all $i\in [n]$ and $x_S$ to $0$ for all $S$ containing at least two elements, is a quasi-isomorphism.
\end{proposition}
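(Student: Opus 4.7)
The plan is to identify $\Pi := \Pi_n(Q,W,\eps)$ with the cobar construction on the exterior Hopf coalgebra, and then deduce the quasi-isomorphism from the Koszul duality between the polynomial and exterior algebras. Since $Q$ has a single vertex, $\Pi$ is, as a graded $k$-algebra, the free tensor algebra $TV$ on $V = \bigoplus_{\emptyset\neq S\subseteq[n]}kx_S$ with $|x_S| = 1-|S|$; note that the arrows of $\ol{Q}$ span exactly this $V$, including the loop $x_{[n]}$ of degree $1-n$. Let $\Lambda = \Lambda(y_1,\ldots,y_n)$ be the exterior Hopf algebra on generators $y_i$ of cohomological degree $-1$, and for $S = \{i_1<\cdots<i_s\}$ write $y_S = y_{i_1}\wedge\cdots\wedge y_{i_s}$. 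Then $x_S \leftrightarrow \Sigma^{-1}y_S$ identifies $V$ with $\Sigma^{-1}\bar\Lambda$, giving a graded-algebra isomorphism $\Pi \cong T(\Sigma^{-1}\bar\Lambda) = \Omega\Lambda$.

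The next and main technical step is to check that the Ginzburg differential corresponds under this identification to the cobar differential on $\Omega\Lambda$. On a generator $\Sigma^{-1}y_S$ the latter is determined by the reduced coproduct
\[
\bar\Delta(y_S) = \sum_{A\sqcup B = S,\ A,B\neq\emptyset}(-1)^{\mathrm{sh}(A,B)}\,y_A\otimes y_B,
\]
where $\mathrm{sh}(A,B)$ is the shuffle/Koszul sign. For an arrow $\alpha = x_S$ of $Q$ one computes $\partial_{\alpha^*}W$ from the explicit potential as a sum over the cyclic occurrences of $x_{S^c}$, yielding a sum $\sum_{A\sqcup B = S}\pm x_Ax_B$ whose signs combine the factor $(-1)^{|B|-1}$ and the symbol $\eps_{A,B,C}$ from $W$ with the cyclic-derivative sign $(-1)^{|u|(|\alpha|+|v|)}$. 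Multiplying by $\eps(\alpha) = (-1)^{|S|-1}\eps_{S,S^c}$ and the Ginzburg prefactor $-(-1)^{(|\alpha|+1)|\alpha^*|}$, one verifies that the total sign agrees with $(-1)^{\mathrm{sh}(A,B)}$. The dual case $d(\alpha^*) = (-1)^{|\alpha|}\eps(\alpha)\partial_\alpha W$ is symmetric, and for the loop $x_{[n]} = t_i$ the formula $d(t_i) = \sum\eps(\alpha)(\alpha\alpha^* - (-1)^{|\alpha||\alpha^*|}\alpha^*\alpha)$ directly produces a sum over pairs $A\sqcup B = [n]$ matching $\bar\Delta(y_{[n]})$. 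Coassociativity of $\bar\Delta$ is the algebraic counterpart of the hypothesis $\{W,W\}_\eps = 0$, ensuring the correspondence extends to a dg-algebra isomorphism.

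With $\Pi\cong \Omega\Lambda$ in hand, the proposition follows from Koszul duality for $R = k[x_1,\ldots,x_n]$: this polynomial algebra is Koszul with Koszul dual $\Lambda$ (exterior on generators in degree $-1$), and the canonical augmentation $\Omega\Lambda \to R$ sending $\Sigma^{-1}y_i\mapsto x_i$ and $\Sigma^{-1}y_S\mapsto 0$ for $|S|\geq 2$ is a quasi-isomorphism. Under the identification of the previous steps, this is precisely the stated morphism. As an independent check one can introduce the weight grading $\mathrm{wt}(x_S) = |S|$, preserved by $d$ and matching the polynomial total degree on $R$; for each $w$, the weight-$w$ subcomplex is finite-dimensional and recognizable as a Koszul-type subcomplex whose cohomology, by a direct computation or an explicit contracting homotopy in positive weight, is concentrated in degree zero and isomorphic to $R_w$. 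The main obstacle I anticipate is the sign book-keeping in the middle step: between the $\eps_{A,B,C}$ symbols in $W$, the $(-1)^{|B|-1}$ factor, the cyclic-derivative signs, and the deliberate choice of $\eps$, several sign conventions must conspire to reproduce exactly the shuffle signs on the cobar side, and checking this uniformly across the cases of an arrow, its dual, and the loop will be the most delicate part of the proof.
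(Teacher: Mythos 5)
Your proposal is correct and, at the level of structure, takes essentially the same route as the paper: the paper identifies $\Pi_n(Q,W,\eps)$ with the explicit dg tensor algebra model $(k\langle(x_S)_{S\neq\emptyset}\rangle,d)\to R$ constructed in section~6.2 of Thanhoffer de V\"olcsey--Van~den Bergh (which is precisely the cobar construction $\Omega\Lambda$ on the exterior coalgebra, i.e.\ the Koszul resolution you rederive), and then asserts that the Ginzburg differential coincides with that one. Like the paper, you leave the sign bookkeeping as an assertion rather than a computation, so the two arguments are at the same level of detail; your version just reconstructs the cited model from first principles via Koszul duality instead of quoting it.
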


\begin{proof}
By Section~6.2 of \cite{ThanhofferVandenBergh16}, there is a quasi-isomorphism
\[
(k\langle(x_S)_{S\neq \emptyset}\rangle, d)\longrightarrow R
\]
of dg algebras. The statement follows from the fact that the differential of $\Pi_n(Q, W, \eps)$ coincides with that of $(k\langle(x_S)_{S\neq \emptyset}\rangle, d)$.
\end{proof}

\subsection{A monoidal functor}

Let $G$ be a finite group. From now on, we always assume that the field $k$ is algebraically closed and the order of $G$ is not divisible by the characteristic of the ground field $k$. For two $kG$-modules $M$ and $N$, the $kG$-module $\Hom(M, N)$ is defined to be the vector space $\Hom_k(M, N)$ with the module structure
\[
(\varphi g)(m)=\varphi(mg^{-1})g
\]
for all $\varphi \in \Hom_k(M, N)$, $g\in G$, and $m\in M$. For two $kG$-modules $M$ and $N$, the \linebreak $kG$-module $M\ten N$ is defined to be the vector space $M\ten_k N$ with the module structure
\[
(m\ten n)g=mg\ten ng
\]
for all $m\in M$, $n\in N$, and $g\in G$. With these definitions, one easily checks that we have the canonical isomorphisms
\[
\begin{tikzcd}
\Hom_{kG}(X\ten Y, Z) \arrow[no head]{r}{\sim} & \Hom_{kG}(X, \Hom(Y, Z))\ko X, Y, Z\in \Mod kG \: ,
\end{tikzcd}
\]
mapping a morphism $f\colon X\ten Y\to Z$ to the morphism $x\mapsto f(x\ten ?)$. In other words, the internal $\Hom$-functor $\Hom(?, -)$ and the tensor functor $?\ten -$ make the category $\Mod kG$ into a closed monoidal category. If we let the second argument of the internal $\Hom$-functor be the trivial $kG$-module $k$, then it restricts to the duality
\[
* \colon (\mod kG)\op \iso \mod kG \: .
\]
Let $\{L_0, \ldots, L_m\}$ be a complete set of irreducible representations of $G$ with the trivial representation $L_0$. Put $I=\{0, \ldots, m\}$ and denote by $kI$ the product of copies indexed by $I$ of the ground field $k$. The category $\Bimod kI$ also admits a closed monoidal structure with the internal $\Hom$-functor $\Hom(?, -)=\Hom_{kI}(?, -)$ and the tensor functor $?\ten -=\, ?\ten_{kI}-$. If we let the second argument of the internal $\Hom$-functor be the $kI$-bimodule $kI$, then it restricts to the duality
\[
* \colon (\mod kI)\op \iso \mod kI\: .
\]
By Maschke's theorem, the group algebra $kG$ is semisimple. We may and will assume that we have $kG=\prod_{i\in I}\End_{kG}(L_i^{\oplus n_i})$. For any $i\in I$, let $e_i$ be a primitive idempotent in $\End_{kG}(L_i^{\oplus n_i})$. Put $e=\sum_{i\in I}e_i$ and $L=\bigoplus_{i\in I}L_i$. In the spirit of the proof of Theorem~4.4.6 of \cite{Ginzburg06}, we define $F$ to be the functor $\Hom_{kG}(L, L\ten ?)\colon \Mod kG \to \Bimod kI$. For an exact category $\ca$, we write $\cc(\ca)$ for the category of complexes in $\ca$. We prove the following lemmas.

\begin{lemma} \label{lemma:functor property}
The functor $F$ is exact and monoidal, and so is the induced functor
\[
F\colon \cc(\Mod kG) \longrightarrow \cc(\Bimod kI)
\]
between categories of complexes. The restriction $\mod kG \to \bimod kI$ of $F$ is compatible with the dualities on both sides.
\end{lemma}

\begin{proof}
Since the $kG$-module $L$ is projective, the functor $\Hom_{kG}(L, L\ten ?)$ is exact. Since the algebra $kG$ is semisimple, we have the canonical decomposition
\[
L_i \ten X=\bigoplus_{j\in I}L_{ij}^X
\]
as a $kG$-module for all $i\in I$ and $kG$-modules $X$, where $L_{ij}^X$ is isomorphic to a direct sum of copies of $L_j$. Let us prove that we have an isomorphism
\[
\begin{tikzcd}
\Hom_{kG}(L, L\ten M)\ten \Hom_{kG}(L, L\ten N) \arrow{r}{\sim} & \Hom_{kG}(L, L\ten M\ten N)
\end{tikzcd}
\]
of $kI$-bimodules for all $kG$-modules $M$ and $N$. It suffices to prove that we have
\[
\begin{tikzcd}
\Hom_{kG}(L, L_j\ten M)\ten \Hom_{kG}(L_i, L\ten N) \arrow{r}{\sim} & \Hom_{kG}(L_i, L_j\ten M\ten N)
\end{tikzcd}
\]
for all $i$, $j\in I$. The left hand side equals
\begin{align*}
 & \bigoplus_{k\in I}\Hom_{kG}(L_k, L_j\ten M)\ten \Hom_{kG}(L_i, L_k\ten N) \\
= & \bigoplus_{k\in I}\Hom_{kG}(L_k, L_{jk}^M)\ten \Hom_{kG}(L_i, L_{ki}^N) \: .
\end{align*}
The right hand side is isomorphic to
\[
\bigoplus_{k\in I}\Hom_{kG}(L_i, L_{jk}^M \ten N) \: .
\]
Since the field $k$ is algebraically closed, we have the canonical isomorphism
\[
\Hom_{kG}(L_k, L_{jk}^M)\ten_k L_k \xlongrightarrow{_\sim} L_{jk}^M
\]
of $kG$-modules. It follows that we have
\[
\begin{tikzcd}
\bigoplus_{k\in I}\Hom_{kG}(L_i, L_{jk}^M \ten N) & \bigoplus_{k\in I}\Hom_{kG}(L_k, L_{jk}^M)\ten \Hom_{kG}(L_i, L_{ki}^N)\arrow[swap]{l}{\sim} \: .
\end{tikzcd}
\]
We conclude that the functor $F$ is monoidal. Clearly, the induced functor
\[
F\colon \cc(\Mod kG) \longrightarrow \cc(\Bimod kI)
\]
also has the same properties. We now prove the last statement. Let $M$ be a finite-dimensional $kG$-module. We have the canonical isomorphism
\[
L\ten M^* \xlongrightarrow{_\sim} \Hom(M, L)
\]
of $kG$-modules. By the adjunction $(?\ten M, \Hom(M, ?))$ in the closed monoidal category $\Mod kG$, we have the isomorphism
\[
\begin{tikzcd}
\Hom_{kG}(L\ten M, L) \arrow[no head]{r}{\sim} & \Hom_{kG}(L, \Hom(M, L)) \: .
\end{tikzcd}
\]
Since the canonical bilinear form
\[
\begin{tikzcd}
\Hom_{kG}(L\ten M, L)\ten \Hom_{kG}(L, L\ten M)\arrow{r} & \Hom_{kG}(L, L)\arrow{r} & k
\end{tikzcd}
\]
is non-degenerate, we have the isomorphism
\[
\begin{tikzcd}
\Hom_{kG}(L\ten M, L)\arrow{r}{\sim} & \Hom_{kG}(L, L\ten M)^*\: .
\end{tikzcd}
\]
Combining the above isomorphisms we conclude that the restriction of $F$ to $\mod kG$ is compatible with the dualities.
\end{proof}

For a $kG$-module $M$, we define the $kG$-bimodule $G\#M$ to be the tensor space $kG\ten_k M$ with the bimodule structure
\[
g'(g\ten m)g''=g'gg''\ten mg''
\]
for all $m\in M$, $g$, $g'$, and $g''\in G$. It gives rise to a functor $G\# ? \colon \Mod kG \to \Bimod kG$.

\begin{lemma}
For an algebra object $A$ in $\Mod kG$, the multiplication
\[
\mu \colon (G\#A)\ten(G\#A) \longrightarrow G\#A
\]
determined by $\mu(g\ten a, g'\ten a')=gg'\ten(ag')a'$ and the unit
\[
\eta \colon kG\to G\#A
\]
determined by $\eta(g)=g\ten 1_A$ make $G\#A$ into an algebra object in $\Bimod kG$.
\end{lemma}

\begin{proof}
It suffices to prove that $(G\#A, \mu, \eta)$ satisfies the associativity and the unit law. For any $g\ten a$, $g'\ten a'$, and $g''\ten a'' \in G\#A$, we have
\[
\mu(\mu(g\ten a, g'\ten a'), g''\ten a'') = gg'g''\ten(((ag')a')g'')a'' = gg'g''\ten(ag'g'')(a'g'')a''\: ,
\]
where the second equality follows from the fact that the multiplication $A\ten A\to A$ is a morphism of $kG$-modules. On the other hand, we have
\[
\mu(g\ten a, \mu(g'\ten a', g''\ten a'')) = gg'g''\ten (ag'g'')(a'g'')a'' \: .
\]
These imply the associativity. For any $g\ten a\in G\#A$, we have
\[
\mu(\eta(1_G), g\ten a) = g\ten (1_A \, g)a = g\ten a \: ,
\]
where the second equality follows from the fact that the unit $k\to A$ is a morphism of $kG$-modules. On the other hand, we have
\[
\mu(g\ten a, \eta(1_G)) = g\ten (a\, 1_G)1_A = g\ten a \: .
\]
These imply the unit law.
\end{proof}

\begin{lemma} \label{lemma:natural isomorphism}
We have a natural isomorphism $F\simeq e(G\# ?)e$. Moreover, for any algebra object $A$ in $\Mod kG$, the isomorphism $FA\simeq e(G\# A)e$ is compatible with the algebra object structures on both sides.
\end{lemma}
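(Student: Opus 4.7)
The plan is to realize $L$ as the right ideal $e\cdot kG\subseteq kG$, identify $L\ten M$ with the subspace $e\cdot(G\#M)\subseteq G\#M$, and then apply the standard isomorphism $\Hom_{kG}(e\cdot kG,N)\cong Ne$. Compatibility with the algebra structures will then follow from a direct comparison of multiplication formulas.

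First, since $kG\cong\prod_i\End_{kG}(L_i^{\oplus n_i})\cong\prod_i M_{n_i}(k)$ and each $e_i$ is a primitive matrix idempotent, $e_i\cdot kG\cong L_i$, hence $L\cong e\cdot kG$ as $(kI,kG)$-bimodules (with $kI$ acting by left multiplication). One then observes that $L\ten M=(e\cdot kG)\ten_k M$ coincides with $e\cdot(G\#M)=\{eg\ten m\}$ as a subspace of $G\#M=kG\ten_k M$, and that the diagonal right $kG$-action on $L\ten M$ agrees with the restriction of the right bimodule action on $G\#M$. Applying $\Hom_{kG}(e\cdot kG,N)\cong Ne$, $\phi\mapsto\phi(e)$, to $N=L\ten M$ yields a natural isomorphism
\[
\Phi_M\colon F(M)=\Hom_{kG}(L,L\ten M)\iso (L\ten M)\cdot e=e(G\#M)e,
\]
and inspection of the actions shows that $\Phi_M$ intertwines the two $kI$-bimodule structures.

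For the second assertion, let $A$ be an algebra in $\mod kG$, and let $x=\phi(e)$, $y=\psi(e)$ in $e(G\#A)e$. Writing $x=\sum_j l_j\ten m_j$ and $y=\sum_i l'_i\ten n_i$ with $l_j,l'_i\in L\subseteq kG$, and using $\phi(l'_i)=\phi(e\cdot l'_i)=x\cdot l'_i=\sum_j l_j l'_i\ten m_j l'_i$ (right $kG$-action on $L\ten A$), the product in the algebra $F(A)$ evaluates to
\[
(\phi\cdot\psi)(e)=(L\ten\mu_A)\bigl((\phi\ten 1)(\psi(e))\bigr)=\sum_{i,j}l_j l'_i\ten(m_j l'_i)\cdot n_i.
\]
The explicit multiplication $(g\ten a)(g'\ten a')=gg'\ten(ag')a'$ on $G\#A$ produces the same expression for $xy$. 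Hence $\Phi_A$ is an algebra isomorphism.

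The only delicate point is keeping track of the various right $kG$-actions in play---on $L$, diagonally on $L\ten M$, and as part of the bimodule structure on $G\#M$---but these all coincide on the common subspace $e\cdot(G\#M)=L\ten M$, after which the remaining verifications are routine.
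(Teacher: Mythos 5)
Your proof is correct and takes essentially the same approach as the paper: identify $L$ with the right ideal $ekG$ so that $L\ten M$ is identified with $e(G\#M)\subseteq G\#M$ (the paper reaches the same identification via the canonical isomorphism $L\ten_{kG}(G\#M)\cong L\ten M$), and then apply $\Hom_{kG}(ekG,N)\cong Ne$ by evaluation at $e$. For the second assertion you carry out explicitly the multiplication comparison that the paper only asserts ``follows from the definitions,'' and your computation is correct.
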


\begin{proof}
Since the $kG$-module $L$ is projective, the first statement follows from the natural isomorphisms
\[
\Hom_{kG}(L, L\ten ?) \xlongleftarrow{_\sim} \Hom_{kG}(L, L\ten_{kG}(G\# ?)) \xlongrightarrow{_\sim} \Hom_{kG}(L, e(G\# ?))\xlongrightarrow{_\sim} e(G\# ?)e\: .
\]
The second statement follows from the definitions of the algebra object structures on both sides.
\end{proof}

\subsection{The structure theorem in the $\GL_n$ case}

Recall that $R$ denotes the polynomial algebra $k[x_1, \ldots, x_n]$. Denote the power series algebra $k\llbracket x_1, \ldots, x_n \rrbracket$ by $\hat{R}$. From now on, we always assume that $G$ is a finite subgroup of $\GL_n$. Let $V$ be the natural representation of $G$ given by the inclusion $G\to \GL_n$. It gives rise to a group action of $G$ on the polynomial algebra $R=\Sym V^*$. Denote the $G$-invariant subalgebra of $R$ by $R^G$. Let $U$ be the graded $kG$-module $\bigoplus_{p=1}^n \Si^{-p}V^{\wedge p}$ and $U_c$ its graded $kG$-submodule $\bigoplus_{p=1}^{n-1} \Si^{-p}V^{\wedge p}$. Let $\wedge$ be the morphism $U_c \ten U_c \to U$ which maps $u\ten u'$ to $u\wedge u'$. Let $A$ be the dg tensor algebra $(T(F\Si^{-1}U^*), d)$ with the differential determined by the composed map
\[
-(\si^{-1}\ten \si^{-1})\circ (F\wedge)^*\circ \si \colon F\Si^{-1}U^* \longrightarrow F\Si^{-1}U_c^* \ten F\Si^{-1}U_c^* \: .
\]
Let $A'$ be the dg quotient $A/e_0$. We describe the singularity category $\sg R^G$ in the $\GL_n$ case as follows.

\begin{theorem} \label{thm:GLn}
Let $n$ be a positive integer. Let $k$ be an algebraically closed field and $G\subset \GL_n$ a finite subgroup whose order is not divisible by the characteristic of $k$.
\begin{itemize}
\item[a)] There exists a triangle equivalence up to direct summands
\[
\sg R^G \to \cosg\!^s A'
\]
mapping $(L\ten R)^G$ to $A'$.
\item[b)] If $R^G$ has isolated singularities, then there exist triangle equivalences
\[
\begin{tikzcd}
\sg \hat{R}^G & \sg R^G \arrow[swap]{l}{\sim} \arrow{r}{\sim} & \cosg\!^s A' \: .
\end{tikzcd}
\]
\end{itemize}
\end{theorem}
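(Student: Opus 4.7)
The plan is to route both sides of the claimed equivalence through the \emph{skew group algebra} $G \# R$ and to identify its minimal model via the constructions of this section, in the spirit of \cite{KalckYang18}. I would first invoke the standard Morita picture (Lemma~\ref{lemma:invariant subalgebra}): with $e = \sum_i e_i$ and $e_0$ the idempotent corresponding to the trivial representation $L_0$, one has $e_0(G\#R)e_0 \cong R^G$ and the $R^G$-module $(L\ten R)^G$ corresponds under $\Hom_{R^G}(R, -)$ to the projective $e(G\#R)$. Combined with a reduction of the singularity category of a quotient singularity in terms of the skew-group endomorphism algebra (generalising the dimension-$3$ statement of~\cite{KalckYang18}), this produces a triangle equivalence up to direct summands
\[
\sg R^G \xrightarrow{\sim} \cc^s_{B'}, \qquad B := e(G\#R)e, \quad B' := B/(e_0),
\]
sending $(L\ten R)^G$ to $B'$; via Lemma~\ref{lemma:natural isomorphism}, $B$ is identified with $F(R)$.

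Next, I would replace $B$ by a manageable minimal model. Proposition~\ref{prop:minimal model} supplies a quasi-isomorphism $\Pi_n(Q, W, \eps) \xrightarrow{\sim} R$, and this map is $G$-equivariant because the definitions of $Q$, $W$ and $\eps$ depend only on the subset combinatorics of $[n]$ and the wedge product on $V$, both of which are $G$-equivariant. Applying the exact monoidal functor $F$ of Lemma~\ref{lemma:functor property} produces a quasi-isomorphism $F(\Pi_n(Q, W, \eps)) \xrightarrow{\sim} F(R) = B$ of dg algebras. Since $\Pi_n(Q, W, \eps)$ is the tensor algebra over $k$ on the graded $kG$-module $\Si^{-1}U^*$ (the arrows $x_S$ with $|S|=p$ assembling into a copy of $(V^{\wedge p})^*$ placed in degree $1-p$), monoidality of $F$ identifies $F(\Pi_n(Q, W, \eps))$ with the $kI$-bimodule tensor algebra $T(F\Si^{-1}U^*) = A$, while the Koszul/Ginzburg differential of $\Pi_n$ transports under $F$ precisely to $-(\si^{-1}\ten\si^{-1})\circ (F\wedge)^*\circ \si$, the differential featured in the definition of $A$. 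This is the content of Proposition~\ref{prop:GLn minimal model}, and passing to the $e_0$-quotients on both sides yields part~(a).

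Part~(b) would follow from part~(a) via a separate argument: under the isolated singularities hypothesis, the canonical morphism $R^G \to \hat R^G$ induces an equivalence $\sg R^G \xrightarrow{\sim} \sg \hat R^G$ up to direct summands, since the singular locus is concentrated at the maximal ideal one is completing. The main obstacle is the Morita identification underpinning Proposition~\ref{prop:GLn minimal model}: one has to check the $G$-equivariance of Proposition~\ref{prop:minimal model}, track the degree and sign conventions (the signs $\eps_{A, B, C}$ arising from the wedge product and the sign map $\eps$ on $Q_1$), and verify cleanly that the monoidal functor $F$ transforms the cyclic-derivative differential of $\Pi_n(Q, W, \eps)$ into the $F\wedge$-differential of $A$. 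Once this is secured, the remaining assembly is essentially formal.
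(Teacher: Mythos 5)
Your proposal matches the paper's proof essentially step by step: Morita-reduce to $e(G\# R)e$ via Lemma~\ref{lemma:invariant subalgebra}, identify it with $A$ by pushing the $G$-equivariant minimal model of Proposition~\ref{prop:minimal model} through the monoidal functor $F$ (this is Proposition~\ref{prop:GLn minimal model}), and conclude via the proof of Kalck--Yang's Theorem~6.6 for part~a) and a completion comparison (Keller--Murfet--Van~den Bergh, Proposition~A.8, applicable because $R^G$ is a finitely generated $k$-algebra by Hilbert) for part~b). The hypothesis checks you defer as ``formal'' --- right noetherianity and finite global dimension of $e(G\# R)e$, and smoothness of $A$ --- are exactly what the paper verifies before invoking Kalck--Yang, and they are indeed routine.
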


\begin{lemma} \label{lemma:invariant subalgebra}
The algebra $R^G$ is isomorphic to $e_0(G\# R)e_0$. Under the identification between modules over $R^G$ and $e_0(G\# R)e_0$ via this isomorphism, the $R^G$-module $(L_i \ten R)^G$ is isomorphic to $e_i(G\# R)e_0$ for all $i\in I$.
\end{lemma}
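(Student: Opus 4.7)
The plan is to derive both assertions as direct consequences of Lemma~\ref{lemma:natural isomorphism}. The polynomial algebra $R=\Sym V^*$ is an algebra object in $\mod kG$ (the $G$-action is by algebra automorphisms), so that lemma applied to $R$ yields an algebra isomorphism $FR\simeq e(G\# R)e$ in $\bimod kI$, compatible with the algebra object structures on both sides.

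The next step is to decompose both sides according to the idempotents $e_i$. Using $L=\bigoplus_{i\in I}L_i$, I would identify
\[
FR=\Hom_{kG}(L, L\ten R)=\bigoplus_{i,j\in I}\Hom_{kG}(L_j, L_i\ten R),
\]
so that $e_i(FR)e_j=\Hom_{kG}(L_j, L_i\ten R)$. Taking the $(0,0)$-component and using $L_0=k$, this becomes $\Hom_{kG}(k, R)=R^G$, and the induced algebra structure is patently the one $R^G$ inherits as a subalgebra of $R$. Matching this with the other side gives the algebra isomorphism $R^G\simeq e_0(G\# R)e_0$, which is the first claim.

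For the second claim, the $(i,0)$-component of the same isomorphism reads
\[
e_i(FR)e_0=\Hom_{kG}(L_0, L_i\ten R)=(L_i\ten R)^G,
\]
while on the other side it is $e_i(G\# R)e_0$; Lemma~\ref{lemma:natural isomorphism} then supplies a canonical $k$-linear bijection between the two. What remains is to check compatibility with the right $R^G$-actions: on the left $(L_i\ten R)^G$ carries the restriction of the right $R$-action on the second tensor factor, and on the right $e_i(G\# R)e_0$ is a right $e_0(G\# R)e_0$-module by multiplication. This compatibility should follow from the monoidality of $F$ in Lemma~\ref{lemma:functor property} together with the naturality of the isomorphism $F\simeq e(G\#?)e$: the right-module structure map $(L_i\ten R)\ten R\to L_i\ten R$ in $\mod kG$ transports under a monoidal natural transformation to the right-module structure map $FR\ten FR\to FR$ in $\bimod kI$, and taking the $(i,0)$-and-$(0,0)$-components yields precisely the required identification of $R^G$-module structures.

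I do not expect a substantive obstacle; the real work has already been done in Lemma~\ref{lemma:functor property} and Lemma~\ref{lemma:natural isomorphism}. The main thing to be careful about is the indexing bookkeeping---keeping straight which component $e_i(\cdot)e_j$ corresponds to which $\Hom_{kG}(L_j, L_i\ten?)$ and that both the algebra product on $e_0(\cdot)e_0$ and the right action on $e_i(\cdot)e_0$ arise by applying $F$ to the appropriate structure maps of $R$ and $L_i\ten R$ in $\mod kG$.
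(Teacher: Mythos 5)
Your argument is correct and follows essentially the same route as the paper's own proof: invoking Lemma~\ref{lemma:natural isomorphism} to obtain $FR\simeq e(G\#R)e$, then reading off the $(i,0)$-components via $e_i(FR)e_0=\Hom_{kG}(L_0,L_i\ten R)\simeq (L_i\ten R)^G$ (with $i=0$ giving the algebra isomorphism). The paper states this more tersely, leaving the algebra- and module-structure compatibility implicit, whereas you spell it out; the underlying argument is the same.
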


\begin{proof}
By Lemma~\ref{lemma:natural isomorphism}, both the statements follow from the canonical isomorphisms
\[
\begin{tikzcd}
(L_i \ten R)^G & \Hom_{kG}(L_0, L_i\ten R)\arrow[swap]{l}{\sim}\arrow[no head]{r}{\sim} & e_i(G\# R)e_0
\end{tikzcd}
\]
for all $i\in I$.
\end{proof}

\begin{proposition} \label{prop:GLn minimal model}
There is a quasi-isomorphism
\[
A \to e(G\# R)e
\]
of dg algebras.
\end{proposition}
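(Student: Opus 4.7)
The plan is to apply the monoidal functor $F$ of Lemmas~\ref{lemma:functor property} and~\ref{lemma:natural isomorphism} to the quasi-isomorphism of Proposition~\ref{prop:minimal model} and identify the source with $A$ and the target with $e(G\#R)e$. So the strategy has two distinct parts: first, recognize $\Pi_n(Q,W,\eps)$ intrinsically as a dg $kG$-algebra built from $U$ (rather than from an ad-hoc quiver), and second, transport everything through $F$.

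For the first part, I would match the arrows $x_S$ of $\ol{Q}$ with a basis of $\Sigma^{-1}U^*$: the collection $\{x_S\colon |S|=p\}$, all of degree $1-p$, spans precisely the summand $\Sigma^{p-1}(V^{\wedge p})^*$ of $\Sigma^{-1}U^*=\bigoplus_{p=1}^n \Sigma^{p-1}(V^{\wedge p})^*$, and the $G$-action induced from $V^*$ makes this identification $G$-equivariant. Hence $\Pi_n(Q,W,\eps)=T(\Sigma^{-1}U^*)$ as a graded $kG$-algebra. Next I would check that the Ginzburg differential, written intrinsically, is the unique derivation whose restriction to $\Sigma^{-1}U^*$ equals
\[
-(\sigma^{-1}\ten\sigma^{-1})\circ\wedge^*\circ\sigma\colon \Sigma^{-1}U^*\longrightarrow \Sigma^{-1}U_c^*\ten \Sigma^{-1}U_c^*.
\]
Concretely, the potential $W=\sum(-1)^{|B|-1}\eps_{A,B,C}x_Ax_Bx_C$ is constructed so that its cyclic derivative $\del_{x_{S^c}}W$ (resp.\ $\del_{x_S}W$) reproduces, up to the normalising signs $\eps(x_S)=(-1)^{|S|-1}\eps_{S,S^c}$ built into $\eps$, the "splitting" map $(V^{\wedge|S|})^* \to \bigoplus_{A\sqcup B=S}(V^{\wedge|A|})^*\ten(V^{\wedge|B|})^*$ dual to the restriction of the wedge product to $U_c\ten U_c$. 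The loop differential $d(x_{[n]})=\sum\eps(x_S)(x_Sx_{S^c}-(-1)^{|x_S||x_{S^c}|}x_{S^c}x_S)$ fits the same pattern by direct inspection.

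With this intrinsic description in hand, applying the exact strongly monoidal functor $F$ of Lemma~\ref{lemma:functor property} gives the graded $kI$-algebra $F\bigl(T(\Sigma^{-1}U^*)\bigr)\simeq T(F\Sigma^{-1}U^*)$, and because $F$ is compatible with the duality, the induced differential on this tensor algebra is precisely $-(\sigma^{-1}\ten\sigma^{-1})\circ(F\wedge)^*\circ\sigma$ on generators. Thus $F\Pi_n(Q,W,\eps)=A$ as dg algebras. Since $F$ is exact, applying it to the quasi-isomorphism $\Pi_n(Q,W,\eps)\to R$ of Proposition~\ref{prop:minimal model} yields a quasi-isomorphism $A\to FR$. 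Composing with the isomorphism of algebra objects $FR\simeq e(G\#R)e$ furnished by Lemma~\ref{lemma:natural isomorphism} produces the desired map $A\to e(G\#R)e$.

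The main obstacle is the sign-bookkeeping in the intrinsic-differential step: one must verify that the Ginzburg conventions (the signs in $d(\alpha)$ and $d(\alpha^*)$, the choice $\eps(x_S)=(-1)^{|S|-1}\eps_{S,S^c}$, and the Koszul signs in $\eps_{A,B,C}$) combine to produce exactly the dual-wedge formula, with no parity discrepancies. Once this is checked, the rest of the argument is a formal application of the exactness, monoidality, and duality-compatibility of $F$ established in Lemmas~\ref{lemma:functor property} and~\ref{lemma:natural isomorphism}.
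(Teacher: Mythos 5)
Your proposal is correct and takes essentially the same route as the paper: identify $\Pi_n(Q,W,\eps)$ intrinsically with $(T(\Si^{-1}U^*),d)$ in $\cc(\mod kG)$, push the quasi-isomorphism of Proposition~\ref{prop:minimal model} through the exact monoidal functor $F$ using Lemma~\ref{lemma:functor property} to track the differential and Lemma~\ref{lemma:natural isomorphism} to identify the target, and recognize the resulting dg algebra as $A$.
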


\begin{proof}
From Proposition~\ref{prop:minimal model}, we deduce that we have the quasi-isomorphism
\[
(T(\Si^{-1}U^*), d)\longrightarrow \Sym V^*
\]
between algebra objects in the monoidal category $\cc(\Mod kG)$. By Lemmas~\ref{lemma:functor property} and \ref{lemma:natural isomorphism}, its image
\[
(T(F\Si^{-1}U^*), d) \longrightarrow e(G\# R)e
\]
under the monoidal functor $F$ is also a quasi-isomorphism between algebra objects in the monoidal category $\cc(\Bimod kI)$. By definition, the differential of $\Si^{-1}U^*$ in $(T(\Si^{-1}U^*), d)$ is given by the composed map
\[
-(\si^{-1} \ten \si^{-1})\circ \wedge^* \circ \si \colon \Si^{-1}U^* \longrightarrow \Si^{-1}U_c^* \ten \Si^{-1}U_c^* \: .
\]
By Lemma~\ref{lemma:functor property}, the differential of $F\Si^{-1}U^*$ in $(T(F\Si^{-1}U^*), d)$ is given by the composed map
\[
-(\si^{-1}\ten \si^{-1})\circ (F\wedge)^*\circ \si \colon F\Si^{-1}U^* \longrightarrow F\Si^{-1}U_c^* \ten F\Si^{-1}U_c^* \: .
\]
This means that the dg algebra $(T(F\Si^{-1}U^*), d)$ equals $A$ and the statement follows.
\end{proof}

\begin{proof}[Proof of Theorem~\ref{thm:GLn}]
a) Since the ring $R$ is right noetherian and the group $G$ is finite, by Lemma~1.5.11 of \cite{McConnellRobson01}, the ring $G\# R$ is right noetherian. By Morita equivalence, the ring $e(G\# R)e$ is also right noetherian. Since $F\Si^{-1}U^*$ is of finite total dimension, the dg algebra $A$ is smooth. By Proposition~\ref{prop:GLn minimal model}, the algebra $e(G\# R)e$ is of finite global dimension. By Proposition~\ref{prop:GLn minimal model}, Lemma~\ref{lemma:invariant subalgebra}, and Theorem~6.6 of \cite{KalckYang18}, there is a triangle equivalence up to direct summand $\sg R^G \to \cosg\!^s B$ mapping $(L\ten R)^G$ to $B$ for some dg algebra $B$. To prove that the dg algebra $B$ coincides with $A'$, it suffices to prove that $B$ is the dg quotient of $A$ by $e_0$. This follows from Theorem~7.1 of \cite{KalckYang18}, \cf also Theorem~3.3.1 of \cite{Booth21}.

b) Since the group $G$ is finite, by Hilbert's finiteness theorem, the algebra $R^G$ is finitely generated. Since $R^G$ has isolated singularities, the first triangle equivalence follows from Proposition~A.8 of \cite{KellerMurfetVandenBergh11}. By Corollary~2.3 of \cite{PavicShinder21}, the category $\sg R^G$ is Karoubian. Then the second triangle equivalence follows from part~a) and Theorem~6.6 of \cite{KalckYang18}, \cf also Proposition~6.2.12 of \cite{Booth21}.
\end{proof}

\subsection{The structure theorem in the $\SL_n$ case}

Recall that $R$ denotes the polynomial algebra $k[x_1, \ldots, x_n]$ and $\hat{R}$ denotes the power series algebra $k\llbracket x_1, \ldots, x_n \rrbracket$. From now on, we always assume that $G$ is a finite subgroup of $\SL_n$. We define a {\em higher McKay quiver with potential} $(Q, W)$ associated with the finite subgroup $G\subseteq \SL_n$ as follows, \cf Section~4.4 of \cite{Ginzburg06}, Section~3 of \cite{BocklandtSchedlerWemyss10}, Section~5 of \cite{IyamaTakahashi13}, and Section~5.6 of \cite{Lam14} for the previous work.

Put $Q_0=I$. Let $p$ be the composition of $\si^n$ with the canonical projection $U\to \Si^{-n}V^{\wedge n}$. Since $G$ is a subgroup of $\SL_n$, the $kG$-module $V^{\wedge n}$ is isomorphic to $L_0$. Fix an isomorphism $\pi \colon V^{\wedge n}\iso k$. Since the bilinear form
\[
\langle -, -\rangle = \pi \circ p \circ \wedge \colon U_c \ten U_c \to k
\]
is non-degenerate and graded symmetric of degree $-n$, the bilinear form
\[
\langle -, -\rangle \circ (\si^{-1}\ten \si^{-1}) \colon \Si U_c \ten \Si U_c \to k
\]
is non-degenerate and graded anti-symmetric and so is its image under the functor $F$. By Lemma~\ref{lemma:functor property}, taking the dual we deduce that the image of $1$ under the map
\[
(\si^{-1}\ten \si^{-1})\circ F\langle -, -\rangle^* \colon kI \longrightarrow F\Si^{-1}U_c^* \ten F\Si^{-1}U_c^*
\]
is non-degenerate and graded anti-symmetric. Let $Q_1$ be a suitable homogeneous $k$-basis of a Lagrangian homogeneous $kI$-subbimodule of $F\Si^{-1}U_c^*$ concentrated in degrees $[\frac{2-n}{2}, 0]$ such that we can write this image as the sum $\sum_{\alpha \in Q_1}[\alpha, \alpha^*]$. Put $\tilde{Q}_1=Q_1\cup \{\alpha^* \mid \alpha \in Q_1\}$. Let $(\alpha^\vee)_{\alpha \in \tilde{Q}_1}$ be the $k$-basis of $F\Si U_c$ which is graded dual to the $k$-basis $(\alpha)_{\alpha \in \tilde{Q}_1}$ of $F\Si^{-1}U_c^*$. Let $\wedge^2$ be the morphism
\[
U_c \ten U_c \ten U_c \longrightarrow U
\]
which maps $u\ten u'\ten u''$ to $u\wedge u'\wedge u''$. Denote the map $\pi \circ p \circ \wedge^2$ by $\langle -, -, -\rangle$. Notice that for any cycle $\alpha \beta \gamma$ in $\tilde{Q}_1$, we can view
\[
\Phi=(\si^{-1}\gamma^\vee) \circ (\si^{-1}\beta^\vee) \circ (\si^{-1}\alpha^\vee)
\]
as a composed morphism in $\cc(\mod kG)$ from $L_i$ to $L_i\ten U_c \ten U_c \ten U_c$, where $i$ denotes the source of $\alpha^\vee$, \cf~the proof of Lemma~\ref{lemma:functor property}. It follows that the composition $F\langle -, -, -\rangle \circ \Phi$ is an endomorphism of the $kG$-module $L_i$. Since the field $k$ is algebraically closed, it is the multiplication by a scalar $\lambda_{\alpha \beta \gamma}\in k$. We define $W\in \Tr(k\tilde{Q})$ as the sum
\[
\sum_{\alpha, \beta, \gamma \in \tilde{Q}_1}(-1)^{|\beta|}\lambda_{\alpha \beta \gamma}\alpha \beta \gamma \: ,
\]
where $\alpha \beta \gamma$ runs through the cyclic equivalence classes up to a minus sign of cycles in $\tilde{Q}$. Recall that the proof of Proposition~\ref{prop:GLn minimal model} shows that $A$ equals the image of the dg algebra $(T(\Si^{-1}U^*), d)$ under $F$. In particular, the differential of $F\Si^{-1}U_c^*$ in $A$ squares to zero. By the proof of Proposition~\ref{prop:SLn minimal model}, so does the differential of $\tilde{Q}_1$ in $\Pi_n(Q, W)$. This means that we have $\{W, W\}=0$.

\begin{definition} \label{def:McKay quiver}
We call $(Q, W)$ defined above a higher McKay quiver with potential associated with the finite subgroup $G\subseteq \SL_n$.
\end{definition}

\begin{remarks} \label{rk:well-defined} \mbox{}
\begin{itemize}
\item[a)] The number of arrows of degree $1-p$ from $i$ to $j$ in $\bar{Q}$ equals the multiplicity of the direct summand $L_i$ in the decomposition of the $kG$-module $L_j\ten (V^{\wedge p})^*$ for all positive integers $p$.
\item[b)] Higher McKay quivers with potential $(Q, W)$ depend on not only the finite group $G$ but also the embedding $G\subseteq \SL_n$.
\item[c)] Our quivers $Q$ depend on various choices of the Lagrangian homogeneous \linebreak $kI$-subbimodule of $F\Si^{-1}U_c^*$, and our potentials $W$ depend on various choices of the isomorphism $\pi$ and of the $k$-basis of $F\Si^{-1}U_c^*$. Nevertheless, by the proof of Proposition~\ref{prop:SLn minimal model}, the associated $n$-dimensional Ginzburg dg algebras are isomorphic for variant choices.
\end{itemize}
\end{remarks}

Let $Q'$ be the graded quiver obtained from $Q$ by removing the vertex $0$ and the arrows which are adjacent to it. Let $W'$ be the image of $W$ under the canonical surjection $\Tr(k\tilde{Q})\to \Tr(k\tilde{Q'})$. We describe the singularity category $\sg R^G$ in the $\SL_n$ case as follows.

\begin{theorem} \label{thm:SLn}
Let $n$ be a positive integer. Let $k$ be an algebraically closed field and $G\subseteq \SL_n$ a finite subgroup whose order is not divisible by the characteristic of $k$.
\begin{itemize}
\item[a)] There exists a triangle equivalence up to direct summands
\[
\sg R^G \longrightarrow \cc^s_{\Pi_n(Q', W')}
\]
mapping $(L\ten R)^G$ to $\Pi_n(Q', W')$.
\item[b)] If $R^G$ has isolated singularities, then there exists an equivalence
\[
\CM R^G \xlongrightarrow{_\sim} \ch_{\Pi_n(Q, W),e_0}
\]
of exact categories mapping $(L\ten R)^G$ to $\Pi_n(Q, W)$. It induces a triangle equivalence
\[
\sg R^G \xlongrightarrow{_\sim} \cc_{\Pi_n(Q', W')}
\]
between $(n-1)$-Calabi--Yau triangulated categories. If moreover, we have $n\geq 2$, then both categories $\sg R^G$ and $\CM R^G$ contain the canonical $(n-1)$-cluster-tilting object $(L\ten R)^G$.
\end{itemize}
\end{theorem}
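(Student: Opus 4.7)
The plan is to deduce Theorem~\ref{thm:SLn} from Theorem~\ref{thm:GLn} together with the forthcoming Proposition~\ref{prop:SLn minimal model}, which identifies (up to a Morita equivalence) the dg algebra $A$ of the previous subsection with the $n$-dimensional Ginzburg dg algebra $\Pi_n(Q, W)$ attached to the McKay quiver with potential. The central point is that the hypothesis $G \subseteq SL_n$ promotes the generic dg tensor algebra $A$ into an honest Ginzburg dg algebra by providing a Calabi--Yau pairing of degree $-n$.

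For part (a), I would first match $A$ with $\Pi_n(Q, W)$ Morita-equivalently. The inclusion $G \subseteq SL_n$ yields $V^{\wedge n} \simeq L_0$, so the bilinear form $\pi \circ p \circ \wedge$ is non-degenerate and graded symmetric of degree $-n$ on $U_c \ten U_c$; after applying the monoidal functor $F$ of Lemma~\ref{lemma:functor property} and dualising, one obtains the non-degenerate graded anti-symmetric pairing on $F\Si^{-1}U_c^*$ required to extract a Lagrangian basis $Q_1$ with dual basis $\{\alpha^* \mid \alpha \in Q_1\}$, reconstructing the doubled graded quiver $\tilde{Q}$. The remaining summand $F(\Si^{-n}V^{\wedge n})^* \simeq kI$ contributes the loops $t_i$ of degree $1-n$ of $\ol{Q}$. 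A sign-tracking calculation, performed in Proposition~\ref{prop:SLn minimal model}, then shows that the cyclic derivatives of the trace potential $W$ reproduce the differential $-(\si^{-1}\ten \si^{-1})\circ (F\wedge)^* \circ \si$ of $A$ on arrows, while the loop differentials $d(t_i)$ match the remaining component coming from the top wedge $V^{\wedge n}$. Dividing out the idempotent $e_0$ gives $A' \simeq \Pi_n(Q', W')$ Morita-equivalently and hence quasi-isomorphically, so the triangle equivalence is inherited from Theorem~\ref{thm:GLn}(a).

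For part (b), I would work inside the framework of Wu's Higgs categories \cite{Wu23}. The isolated-singularity hypothesis together with the Auslander-type theory of non-commutative crepant resolutions (Iyama, Iyama--Takahashi) ensures that $(L \ten R)^G$ generates $\CM R^G$ as an exact category and that its endomorphism algebra is Morita equivalent to $e(G\# R)e \simeq H^0(\Pi_n(Q, W))$, which is of finite global dimension. The main equivalence theorem of \cite{Wu23} (generalising Amiot--Iyama--Reiten and Keller--Reiten) then supplies an exact equivalence $\CM R^G \to \ch_{\Pi_n(Q, W), e_0}$ sending $(L\ten R)^G$ to $\Pi_n(Q, W)$. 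Passing to the stable categories, one factors out the projective-injective summand corresponding to $e_0$ and obtains the induced triangle equivalence $\sg R^G \to \cc_{\Pi_n(Q', W')}$; the $(n-1)$-Calabi--Yau property is a general feature of cluster categories of $n$-Calabi--Yau Ginzburg dg algebras, and for $n \geq 3$ the image of $e_0\Pi_n(Q, W)$ is the canonical $(n-1)$-cluster-tilting object on both sides.

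The main obstacle is the computation underlying Proposition~\ref{prop:SLn minimal model}, namely verifying that the explicit trace-formula potential $W$ has cyclic derivatives agreeing on the nose, up to the prescribed signs $\eps(\alpha)$, with the a priori differential of $A$ after passage through $F$. This requires a careful sign analysis combining the Lagrangian decomposition of $F\Si^{-1}U_c^*$, the graded anti-symmetry of the pairing, and the sign conventions of the cyclic derivative and the necklace bracket, all of which rely on the isomorphism $V^{\wedge n}\simeq L_0$ furnished by the $SL_n$-hypothesis. A secondary subtlety is checking that the hypotheses of the Higgs-category equivalence of \cite{Wu23} are met for the idempotent $e_0$ and that the resulting exact structures are identified with the Frobenius exact structure on $\CM R^G$.
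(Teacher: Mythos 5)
Your proposal takes the same route as the paper. Part~(a) is deduced from Theorem~\ref{thm:GLn}(a) via the isomorphism $\Pi_n(Q, W) \simeq A$ of Proposition~\ref{prop:SLn minimal model}, and part~(b) is obtained by applying the Higgs-category machinery of Wu and Keller--Wu. Two corrections are in order.

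First, your claim that the image of $e_0\Pi_n(Q, W)$ is the canonical $(n-1)$-cluster-tilting object is incorrect. The subcategory $\add e_0\Pi_n(Q, W)$ consists of the projective-injective objects of the Frobenius category $\ch_{\Pi_n(Q, W), e_0}$, and its image in the stable category $\cc_{\Pi_n(Q', W')}$ vanishes. The canonical $(n-1)$-cluster-tilting object is $(L\ten R)^G$, which corresponds to $\Pi_n(Q, W)$ in the Higgs category and to $\Pi_n(Q', W')$ after passing to the stable category.

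Second, part~(b) of your argument glosses over the verifications that make the Higgs-category equivalence applicable: one must establish that $H^0(\Pi_n(Q', W'))$ is finite-dimensional, that $\add e_0\Pi_n(Q, W)$ is functorially finite in $\add \Pi_n(Q, W)$, and that the homology of $\Pi_n(Q, W)$ is concentrated in degree~$0$. The paper proves the first of these by passing to the completion $\hat{R}^G$ and combining Auslander's $\Hom$-finiteness criterion for isolated singularities with results of Iyama--Wemyss (on $\hat{R}$ being maximal Cohen--Macaulay over $\hat{R}^G$) and Leuschke--Wiegand (identifying $G\#\hat{R}$ with $\End_{\hat{R}^G}(\hat{R})$); you invoke ``Auslander-type theory of non-commutative crepant resolutions'' in a general way but do not supply this chain. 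Without these facts the appeal to Keller--Wu's Theorem~4.18 and Kalck--Iyama--Wemyss--Yang's Theorem~2.7 (which the paper uses in place of a single main theorem of Wu) is not justified, so the vague references here do not yet constitute a proof.
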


The following examples serve to illustrate the construction of higher McKay quivers with potential in dimension $2$, $3$, and $4$, respectively.

\begin{example} \label{ex:dimension 2}
Recall that McKay \cite{McKay83} gives the following correspondence between Kleinian singularities and finite subgroups of $\SL_2$, \cf also Section~6.3 of \cite{LeuschkeWiegand12}.
\[
\begin{array}{|c||c|}
\hline
\text{singularity} & \text{subgroup} \\
\hline
A_m & C_{m+1}, \text{cyclic} \\
\hline
D_m & D_{m-2}, \text{binary dihedral} \\
\hline
E_6 & T, \text{binary tetrahedral} \\
\hline
E_7 & O, \text{binary octahedral} \\
\hline
E_8 & I, \text{binary icosahedral} \\
\hline
\end{array}
\]
In the case of type $A_m$, the finite subgroup $G\subset \SL_2$ is generated by
$\begin{bmatrix}
\omega & 0 \\
0 & \omega^{-1}
\end{bmatrix}$, where $\omega \in \C$ is a primitive $(m+1)$th root of unity. Then the higher McKay quiver $Q$ associated with $G\subset \SL_2$ is the extended Dynkin quiver $\tilde{A}_m$ with the set of vertices $\Z/(m+1)\Z$ and the arrows from $i$ to $i+1$, which recovers the McKay correspondence. The potential $W$ associated with $G\subset \SL_2$ vanishes for degree reason.

Let $Q'$ be the quiver with potential obtained from $Q$ by removing the vertex $0$ and the arrows which are adjacent to it. It gives rise to the $2$-dimensional Ginzburg dg algebra $\Pi_2(Q')$. In this case, the commutative ring $\C[x_1, x_2]^G$ has an isolated singularity. So the category $\CM \C[x_1, x_2]^G$ of Cohen--Macaulay modules is equivalent to the Higgs category $\ch_{\Pi_2(Q), e_0}$. The singularity category $\sg \C[x_1, x_2]^G$ is triangle equivalent to the cluster category $\cc_{\Pi_2(Q')}$.
\end{example}

\begin{example} \label{ex:dimension 3}
Let $k$ be the field $\C$ and $G\subset \SL_3$ the finite subgroup generated by
$\begin{bmatrix}
\omega & 0 & 0 \\
0 & \omega & 0 \\
0 & 0 & \omega
\end{bmatrix}$, where $\omega \in \C$ is a primitive third root of unity. Then the higher McKay quiver $Q$ associated with $G\subset \SL_3$ is the quiver
\[
\begin{tikzcd}[row sep=1.5cm, column sep=1.5cm]
	& 1\arrow[dr,"3" description] \\
	0\arrow[ur,"3" description] & &
	2\arrow[ll,"3" description] \mathrlap{\: ,}
\end{tikzcd}
\]
where the attached number of an arrow represents its multiplicity. Denote the three arrows with source $0$, $1$, and $2$ by $\alpha_i$, $\beta_i$, and $\gamma_i$, respectively, $i=1$, $2$, $3$. A potential $W$ associated with $G\subset \SL_3$ is $\sum_{\si\in \mathrm{S}_3}(-1)^{\mathrm{sgn}(\si)}\alpha_{\si(1)}\beta_{\si(2)}\gamma_{\si(3)}$. This recovers the McKay quiver with potential in dimension $3$, \cf~\cite{KalckYang18}.

Let $(Q', W')$ be the quiver with potential obtained from $(Q, W)$ by removing the vertex $0$ and the arrows which are adjacent to it. Clearly, the potential $W'$ vanishes. The quiver $Q'$ gives rise to the $3$-dimensional Ginzburg dg algebra $\Pi_3(Q')$. In this case, the commutative ring $\C[x_1, x_2, x_3]^G$ has an isolated singularity. Then the category $\CM \C[x_1, x_2, x_3]^G$ of Cohen--Macaulay modules is equivalent to the Higgs category $\ch_{\Pi_3(Q, W), e_0}$. The singularity category $\sg \C[x_1, x_2, x_3]^G$ is triangle equivalent to the cluster category $\cc_{\Pi_3(Q')}$.
\end{example}

We give the following example in the higher-dimensional and non-abelian case. Notice that the non-abelianness of the subgroup causes the asymmetry of the higher McKay quiver.

\begin{example} \label{ex:dimension 4}
Let $k$ be the field $\ol{\F_5}$ and $G\subset \SL_4$ the finite subgroup given by the image of the composed morphism
\[
\begin{tikzcd}
\mathrm{S}_3 \arrow{r} & \GL_3 \arrow{r} & \SL_4 \: ,
\end{tikzcd}
\]
where the first morphism is given by the natural representation of $S_3$ and the second morphism maps $g$ to
$\begin{bmatrix}
g & 0 \\
0 & \det(g)^{-1}
\end{bmatrix}$. Then a higher McKay quiver $Q$ associated with $G\subset \SL_4$ is the graded quiver
\[
\begin{tikzcd}[row sep=1.5cm, column sep=1.5cm]
	& 1\arrow[dr,shift right=0.75ex]
     \arrow[dr,shift right=-0.25ex,red]
	\arrow[dl,shift right=-0.25ex]
     \arrow[dl,shift right=0.75ex,red]
	\arrow[out=45,in=135,loop] & \\
	0\arrow[ur,shift right=0.75ex]
     \arrow[ur,shift right=-0.25ex,red]
	\arrow[rr,shift right=-0.25ex]
     \arrow[rr,shift right=0.75ex,red]
	\arrow[out=157.5,in=247.5,loop] & &
	2\arrow[ll,shift right=0.75ex]
     \arrow[ll,shift right=-0.25ex,red]
	\arrow[ul,shift right=-0.25ex]
     \arrow[ul,shift right=0.75ex,red]
	\arrow[out=225,in=315,loop,"3" description]
	\arrow[out=0,in=90,loop,red,"2" description]
\end{tikzcd}
\]
with the black arrows of degree $0$ and the red arrows of degree $-1$, where the attached number of a loop represents its multiplicity. Let $\tilde{Q}$ be the graded quiver obtained from $Q$ by adding an arrow $\alpha^*\colon j \to i$ of degree $-1$ (respectively, $-2$) for each arrow $\alpha\colon i \to j$ of degree $-1$ (respectively, $0$). A potential $W$ associated with $G\subset \SL_4$ is a linear combination of classes of cycles $\alpha \beta \gamma$ in $\tilde{Q}$ with $|\alpha|=|\beta|=0$ and $|\gamma|=-1$.

Let $(Q', W')$ be the quiver with potential obtained from $(Q, W)$ by removing the vertex $0$ and the arrows which are adjacent to it. It gives rise to the $4$-dimensional Ginzburg dg algebra $\Pi_4(Q', W')$. In this case, the commutative ring $\ol{\F_5}[x_1, x_2, x_3, x_4]^G$ does not have isolated singularities. So the singularity category $\sg \ol{\F_5}[x_1, x_2, x_3, x_4]^G$ is triangle equivalent up to direct summands to the small cluster category $\cc^s_{\Pi_4(Q', W')}$.
\end{example}

Let $\sym$ be the cyclic symmetrization map $\Tr(k\tilde{Q})\to k\tilde{Q}$ which vanishes on $k\tilde{Q}_0$ and maps the class of a cycle $\alpha_1 \ldots \alpha_p$ in $\tilde{Q}$ to the sum
\[
\sum_{1\leq i\leq p}\pm \alpha_i\ldots \alpha_p \alpha_1 \ldots \alpha_{i-1}\: ,
\]
where the sign is given by the Koszul sign rule.

\begin{lemma} \label{lem:potential}
We have $\sym(W)=((\si^{-1}\ten \si^{-1}\ten \si^{-1})\circ F\langle -, -, -\rangle^*)(1)$.
\end{lemma}

\begin{proof}
The right hand side can be written as a sum $\sum_{\alpha, \beta, \gamma \in \tilde{Q}_1}\mu_{\alpha \beta \gamma}\alpha \beta \gamma$, where $\alpha \beta \gamma$ runs through the cycles in $\tilde{Q}$. Since $(\si^{-1}\ten \si^{-1}\ten \si^{-1})\circ F\langle -, -, -\rangle^*$ is the dual map of $F\langle -, -, -\rangle \circ (\si^{-1}\ten \si^{-1}\ten \si^{-1})$, the coefficient $\mu_{\alpha \beta \gamma}$ is determined by the image of $\gamma^\vee \beta^\vee \alpha^\vee$ under the map $F\langle -, -, -\rangle \circ (\si^{-1}\ten \si^{-1}\ten \si^{-1})$, which is the multiplication by a scalar in $k$. By comparing this scalar $\mu_{\alpha \beta \gamma}$ with the coefficient $\lambda_{\alpha \beta \gamma}$ appearing in the definition of $W\in \Tr(k\tilde{Q})$ we obtain the desired equality.
\end{proof}

\begin{proposition} \label{prop:SLn minimal model}
There is a quasi-isomorphism
\[
\Pi_n(Q, W)\longrightarrow e(G\# R)e
\]
of dg algebras.
\end{proposition}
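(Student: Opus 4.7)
My plan is to deduce the proposition from Proposition~\ref{prop:GLn minimal model} by identifying $\Pi_n(Q,W)$ with the dg algebra $A=(T(F\Si^{-1}U^*),d)$ as dg algebras (up to the choice of a Lagrangian splitting of $F\Si^{-1}U_c^*$). The composition
\[
\Pi_n(Q,W)\;\cong\;A\;\longrightarrow\;e(G\#R)e
\]
will then be the desired quasi-isomorphism.

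The first step is to match the underlying graded algebras. Since $V^{\wedge n}\cong L_0$ as $kG$-modules, one has $F(V^{\wedge n})\cong kI$, so the decomposition $U=U_c\oplus \Si^{-n}V^{\wedge n}$ induces a decomposition
\[
F\Si^{-1}U^*\;=\;F\Si^{-1}U_c^*\,\oplus\,\Si^{n-1}kI
\]
of graded $kI$-bimodules. The second summand is a degree $1-n$ copy of $kI$, and I will identify its canonical $kI$-basis $\{e_i\}_{i\in I}$ with the loops $t_i$ of $\bar Q$. The first summand is by definition the $kI$-bimodule $kQ_1\oplus k\{\alpha^*\}_{\alpha\in Q_1}$ arising from the chosen Lagrangian basis, and under this identification $T(F\Si^{-1}U^*)$ becomes the path algebra $k\bar Q$ over $kI$. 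Hence $\Pi_n(Q,W)$ and $A$ have canonically isomorphic underlying graded algebras.

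The second step, and the main technical point, is to check that the differential of $A$, when transported across this identification, agrees with the differential of $\Pi_n(Q,W)$. The differential of $A$ is determined by
\[
d=-(\si^{-1}\ten\si^{-1})\circ(F\wedge)^*\circ\si\colon F\Si^{-1}U^*\longrightarrow F\Si^{-1}U_c^*\ten F\Si^{-1}U_c^*\,,
\]
so the key is to split $\wedge\colon U_c\ten U_c\to U$ according to the two summands of $U$. The component with target $U_c$, after dualising, is the triple wedge $\wedge'$ projected to $(V^{\wedge n})^*$; the construction of $W$ — where the coefficients are precisely $(-1)^{|\beta|}\tr\bigl((\id_L\ten((\pi^*)^{-1}\circ\wedge'))\circ(\alpha\beta\gamma)\bigr)$ — is engineered so that the cyclic derivatives $\del_{\alpha^*}W$ and $\del_{\alpha}W$ recover $(F\wedge)^*$ on the arrows $\alpha$ and $\alpha^*$; this is the same calculation as in Theorem~4.4.6 of \cite{Ginzburg06}. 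The component with target $\Si^{-n}V^{\wedge n}$, dualised, is the non-degenerate graded anti-symmetric form $-(\si^{-1}\ten\si^{-1})\circ(F\wedge)^*\circ(Fp)^*\circ(F\pi)^*$, whose image of $1$ was written precisely as $\sum_{\alpha\in Q_1}[\alpha,\alpha^*]$ to specify the Lagrangian basis; this matches $d(t_i)$ on the nose. Consequently the identity $\{W,W\}=0$, asserted at the end of the definition, is equivalent to $d^2=0$ on $A$, which holds since $A$ receives a dg algebra map from $(T(\Si^{-1}U^*),d)$.

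The main obstacle is bookkeeping of Koszul signs: the cyclic derivative, the signs $(-1)^{|\beta|}$ in $W$, the sign in $d(t_i)$ via $\eps(\alpha)$, and the degree-$-1$ shift $\si$ all contribute, and one must verify their compatibility with the identification of bases. Once this is in place, combining the resulting isomorphism $\Pi_n(Q,W)\iso A$ with Proposition~\ref{prop:GLn minimal model} yields the quasi-isomorphism $\Pi_n(Q,W)\to e(G\#R)e$, and simultaneously justifies Remark~\ref{rem:well-defined}(c), since any two Lagrangian homogeneous $k$-bases of $F\Si^{-1}U_c^*$ give rise to isomorphic Ginzburg dg algebras through the same identification with $A$.
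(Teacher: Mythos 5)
Your proposal takes essentially the same route as the paper: both construct an explicit isomorphism of graded algebras $\phi\colon \Pi_n(Q,W)\to A$ (identifying $t_i$ with a basis vector of $e_i(F\Si^{n-1}V^{*\wedge n})e_i$ and the arrows of $\tilde{Q}$ with the chosen Lagrangian basis of $F\Si^{-1}U_c^*$), then verify that $\phi$ intertwines the differentials by splitting the dualised wedge map according to the decomposition $U=U_c\oplus\Si^{-n}V^{\wedge n}$, and finally compose with Proposition~\ref{prop:GLn minimal model}.

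The one place where you gloss over the substance is the sentence ``The component with target $U_c$, after dualising, is the triple wedge $\wedge'$ projected to $(V^{\wedge n})^*$.'' As stated this is not literally what happens: the $U_c$-component of $\wedge$ is a map $U_c\ten U_c\to U_c$, and it is related to the triple wedge $\wedge^3\colon U_c\ten U_c\ten U_c\to U$ only \emph{through} the non-degenerate pairing $\pi\circ p\circ\wedge$. This is the content of the paper's identity: if $\ol{\wedge}\colon U_c\iso\Si^{-n}U_c^*$ is the isomorphism induced by the pairing and $\ol{\wedge^3}\colon U_c\ten U_c\to\Si^{-n}U_c^*$ is induced by $\pi\circ p\circ\wedge^3$, then the $U_c$-component of $\wedge$ equals $\ol{\wedge}^{-1}\circ\ol{\wedge^3}$, and dualising this composite is exactly what turns the differential into the cyclic derivatives $\del_\alpha W$ and $\del_{\alpha^*}W$. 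In your draft this step is compressed into ``engineered so that the cyclic derivatives recover $(F\wedge)^*$'' and delegated to Ginzburg's Theorem~4.4.6, but that reference only treats the case $n=3$; for general $n$ the Koszul signs and the interaction of the shift $\si$ with the pairing do require the explicit intermediate maps. So the plan is sound, but to make it a proof you should write out the duality identity and the resulting formulas for $d(\alpha)$ and $d(\alpha^*)$ rather than citing them. Your concluding observation that the same isomorphism simultaneously settles Remark~\ref{rem:well-defined}(c) is correct and is indeed what the paper intends.
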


\begin{proof}
Define $\varphi \colon \Pi_n(Q, W)\to A$ to be the morphism of graded algebras which restricts to the identity on $F\Si^{-1}U_c^*$ and maps $t_i$ to $-\si^{n-1}(F\pi)^*(e_i)$. By Proposition~\ref{prop:GLn minimal model}, it suffices to prove that $\varphi$ is an isomorphism of dg algebras. Clearly, it is an isomorphism of graded algebras and hence it suffices to show that the differential of $F\Si^{-1}U^*$ coincides with that of $k\bar{Q}_1$. Recall that the differential of $F\Si^{-1}U^*$ is given by $-(\si^{-1}\ten \si^{-1})\circ (F\wedge)^*\circ \si$ and we have
\begin{equation} \label{eq:sum of 2-cycles}
((\si^{-1}\ten \si^{-1})\circ F\langle -, -\rangle^*)(1)=\sum_{\alpha \in Q_1}[\alpha, \alpha^*] \: .
\end{equation}
It follows that we have
\[
d(\varphi(t_i))=d(-\si^{n-1}(F\pi)^*(e_i))=\sum_{\alpha \in Q_1}e_i[\alpha, \alpha^*]e_i=\varphi(d(t_i))
\]
for all $i\in I$. Let $\phi$ be the isomorphism $U_c \to \Si^{-n}U_c^*$ induced by the non-degenerate bilinear form $\langle -, -\rangle$ and $\psi$ the morphism
\[
U_c \ten U_c \longrightarrow \Si^{-n}U_c^*
\]
induced by the map $\langle -, -, -\rangle$. Since the composition of the canonical projection \mbox{$U\to U_c$} with $\wedge$ equals $\phi^{-1}\circ \psi$, the restriction of $(F\wedge)^*$ to $FU_c^*$ equals $(F\psi)^* \circ ((F\phi)^*)^{-1}$. Therefore, the differential of $F\Si^{-1}U_c^*$ in $A$ is given by
\[
-(\si^{-1}\ten \si^{-1})\circ (F\wedge)^*\circ \si = -(\si^{-1}\ten \si^{-1})\circ (F\psi)^* \circ ((F\phi)^*)^{-1} \circ \si \: .
\]
Let $((\si \alpha)^\vee)_{\alpha \in \tilde{Q}_1}$ be the $k$-basis of $FU_c$ which is graded dual to the $k$-basis $(\si \alpha)_{\alpha \in \tilde{Q}_1}$ of $FU_c^*$. By the equality~(\ref{eq:sum of 2-cycles}), we have
\[
F\langle -, -\rangle^*(1)=\sum_{\alpha \in Q_1}((-1)^{|\alpha|}\si \alpha \ten \si \alpha^*-(-1)^{(|\alpha|+1)|\alpha^*|}\si \alpha^* \ten \si \alpha) \: .
\]
Since $\phi$ is induced by $\langle -, -\rangle$, it follows that we have
\[
(F\phi)^*((-1)^{|\alpha|}\si^n(\si \alpha)^\vee)=\si \alpha^* \: .
\]
By Lemma~\ref{lem:potential}, we have $F\langle -, -, -\rangle^*(1)=(\si \ten \si \ten \si)(\sym(W))$. Since $\psi$ is induced by $\langle -, -, -\rangle$, it follows that we have
\[
(F\psi)^*(\si^n(\si \alpha)^\vee)=((\si \alpha)^\vee \ten \id \ten \id)(\si \ten \si \ten \si)(\sym(W))=(\si \ten \si)\del_{\alpha}W \: .
\]
We conclude that we have $d(\alpha^*)=(-1)^{|\alpha|}\del_{\alpha}W$ for all $\alpha^* \in F\Si^{-1}U_c^*$. Similarly, from the equality
\[
(F\phi)^*(-(-1)^{(|\alpha|+1)|\alpha^*|}\si^n(\si \alpha^*)^\vee)=\si \alpha
\]
we deduce that we have $d(\alpha)=-(-1)^{(|\alpha|+1)|\alpha^*|}\del_{\alpha^*}W$ for all $\alpha \in F\Si^{-1}U_c^*$. This concludes the proof of that the differential of $F\Si^{-1}U^*$ coincides with that of $k\bar{Q}_1$.
\end{proof}

\begin{lemma} \label{lemma:Hom-finiteness}
If $R^G$ has isolated singularities, then the category $\sg R^G$ is $\Hom$-finite.
\end{lemma}

\begin{proof}
Since $R^G$ has isolated singularities, so does its localization $R^G_\mathfrak{p}$ at each prime ideal $\mathfrak{p}$. By Proposition~3.4 of \cite{Takahashi10}, its adic completion $\hat{R^G_\mathfrak{p}}$ also has isolated singularities. By Lemma~10.160.11 of \cite{Stacks26} and the implication from (d) to (b) in Theorem~1 of \cite{Auslander86}, the category $\sg \hat{R^G_\mathfrak{p}}$ is $\Hom$-finite. Since $G$ is a finite subgroup of $\SL_n$ and its order is not divisible by the characteristic of $k$, by Theorem~1 of \cite{Watanabe74}, the commutative ring $R^G$ is Gorenstein. Then by Proposition~3.5 of \cite{KalckKlapprothPavic24}, the idempotent completion of the category $\sg R^G$ is $\Hom$-finite and hence so is $\sg R^G$.
\end{proof}

\begin{proof}[Proof of Theorem~\ref{thm:SLn}]
a) The statement follows from the proof of part~a) of Theorem~\ref{thm:GLn} and Proposition~\ref{prop:SLn minimal model}.

b) If we have $n=1$, then the group $G$ is trivial and $e_0$ equals $1$. It follows that we have $\CM R^G\simeq \add k[x]$ and $\cc_{\Pi_n(Q, W),e_0}\liso \per k[x]$. So the category $\ch_{\Pi_n(Q, W),e_0}$ is equivalent to the full subcategory of $\per k[x]$ whose objects are the $X$ satisfying
\[
(\per k[x])(k[x], \Si^i X)=0=(\per k[x])(X, \Si^i k[x])
\]
for all positive integers $i$. Since the algebra $k[x]$ is hereditary, any object in $\per k[x]$ is isomorphic to the direct sum of stalk complexes. Then it is easy to see that the above full subcategory of $\per k[x]$ is $\add k[x]$. Therefore, the statements follow.

From now on, we assume that we have $n\geq 2$. Since the order of $G$ is not divisible by the characteristic of $k$, the $R^G$-module $R$ is Cohen--Macaulay. By Lemma~\ref{lemma:Hom-finiteness}, we deduce that the algebra $\End_{\ul{\mathrm{CM}}\, R^G}(R)$ is finite-dimensional, where $\ul{\mathrm{CM}}\, R^G$ denotes the ideal quotient of the category $\CM R^G$ by the ideal of the morphisms which factor through a projective $R^G$-module. Since $G$ is a finite subgroup of $\SL_n$ and its order is not divisible by the characteristic of $k$, by Theorem~5.15 of \cite{LeuschkeWiegand12}, we have an isomorphism
\[
G\# R\xlongrightarrow{_\sim} \End_{R^G}(R)
\]
of algebras. It induces the isomorphism
\[
(G\# R)/e_0 \xlongrightarrow{_\sim} \End_{\ul{\mathrm{CM}}\, R^G}(R)
\]
of algebras. We write $\Pi$ for $\Pi_n(Q, W)$ and $\Pi'$ for $\Pi_n(Q', W')$. By the definition of $(Q', W')$ and Proposition~\ref{prop:SLn minimal model}, we have the isomorphism
\[
H^0(\Pi')\xlongrightarrow{_\sim} e(G\# R)e/e_0
\]
of algebras. By the above isomorphisms, we conclude that $H^0(\Pi')$ is finite-dimensional. Since the commutative ring $R^G$ is noetherian and the $R^G$-module $L\ten R$ is finitely generated, \linebreak so is its submodule $(L\ten R)^G$. Thus, by Lemma~\ref{lemma:invariant subalgebra}, we deduce that the subcategory $\add e_0 \Pi \subseteq \add \Pi$ is functorially finite. By Proposition~\ref{prop:SLn minimal model}, the homology of $\Pi$ is concentrated in degree $0$. Therefore, by parts~(1) and (3) of Corollary~3.26 of \cite{Wu25}, the Higgs category $\ch_{\Pi, e_0}$ is a Karoubian Frobenius exact category with the full subcategory $\add e_0 \Pi$ of projective-injective objects and the endomorphism algebra of the object $\Pi$ is isomorphic to $H^0(\Pi)$. Then by Theorem~2.7 of \cite{KalckIyamaWemyssYang15}, we have an equivalence
\[
\CM R^G \iso \ch_{\Pi, e_0}
\]
of exact categories mapping $(L\ten R)^G$ to $\Pi$ and it induces a triangle equivalence
\[
\sg R^G \iso \cc_{\Pi'} \: .
\]
By part~(1) of Corollary~3.26 of \cite{Wu25}, \cf also part~1) of Theorem~2.2 of \cite{Guo11}, the triangulated category $\cc_{\Pi'}$ is $(n-1)$-Calabi--Yau and hence so is $\sg R^G$. The last statement follows from part~(1) of Corollary~3.26 of \cite{Wu25}, \cf also part~2) of Theorem~2.2 of \cite{Guo11}.
\end{proof}



\def\cprime{$'$} \def\cprime{$'$}
\providecommand{\bysame}{\leavevmode\hbox to3em{\hrulefill}\thinspace}
\providecommand{\MR}{\relax\ifhmode\unskip\space\fi MR }
\providecommand{\MRhref}[2]{%
  \href{http://www.ams.org/mathscinet-getitem?mr=#1}{#2}
}
\providecommand{\href}[2]{#2}

\end{document}